\documentclass[12pt]{amsart}
\usepackage{amsmath,amssymb,amsbsy,amsfonts,latexsym,amsopn,amstext,
                                               amsxtra,euscript,amscd,bm}
                    
\usepackage{url}
\usepackage[colorlinks,linkcolor=blue,anchorcolor=blue,citecolor=blue]{hyperref}
\usepackage{color}

\begin{document}
\newtheorem{problem}{Problem}
\newtheorem{theorem}{Theorem}
\newtheorem{lemma}[theorem]{Lemma}
\newtheorem{claim}[theorem]{Claim}
\newtheorem{cor}[theorem]{Corollary}
\newtheorem{prop}[theorem]{Proposition}
\newtheorem{definition}{Definition}
\newtheorem{question}[theorem]{Question}
\newtheorem{application}[theorem]{Application}
\newtheorem{remark}[theorem]{Remark}

\def\cA{{\mathcal A}}
\def\cB{{\mathcal B}}
\def\cC{{\mathcal C}}
\def\cD{{\mathcal D}}
\def\cE{{\mathcal E}}
\def\cF{{\mathcal F}}
\def\cG{{\mathcal G}}
\def\cH{{\mathcal H}}
\def\cI{{\mathcal I}}
\def\cJ{{\mathcal J}}
\def\cK{{\mathcal K}}
\def\cL{{\mathcal L}}
\def\cM{{\mathcal M}}
\def\cN{{\mathcal N}}
\def\cO{{\mathcal O}}
\def\cP{{\mathcal P}}
\def\cQ{{\mathcal Q}}
\def\cR{{\mathcal R}}
\def\cS{{\mathcal S}}
\def\cT{{\mathcal T}}
\def\cU{{\mathcal U}}
\def\cV{{\mathcal V}}
\def\cW{{\mathcal W}}
\def\cX{{\mathcal X}}
\def\cY{{\mathcal Y}}
\def\cZ{{\mathcal Z}}

\def\A{{\mathbb A}}
\def\B{{\mathbb B}}
\def\C{{\mathbb C}}
\def\D{{\mathbb D}}
\def\E{{\mathbb E}}
\def\F{{\mathbb F}}
\def\G{{\mathbb G}}
\def\I{{\mathbb I}}
\def\J{{\mathbb J}}
\def\K{{\mathbb K}}
\def\L{{\mathbb L}}
\def\M{{\mathbb M}}
\def\N{{\mathbb N}}
\def\O{{\mathbb O}}
\def\P{{\mathbb P}}
\def\Q{{\mathbb Q}}
\def\R{{\mathbb R}}
\def\S{{\mathbb S}}
\def\T{{\mathbb T}}
\def\U{{\mathbb U}}
\def\V{{\mathbb V}}
\def\W{{\mathbb W}}
\def\X{{\mathbb X}}
\def\Y{{\mathbb Y}}
\def\Z{{\mathbb Z}}

\def\ep{{\mathbf{e}}_p}
\def\em{{\mathbf{e}}_m}
\def\eq{{\mathbf{e}}_q}

\def\scr{\scriptstyle}
\def\\{\cr}
\def\({\left(}
\def\){\right)}
\def\[{\left[}
\def\]{\right]}
\def\<{\langle}
\def\>{\rangle}
\def\fl#1{\left\lfloor#1\right\rfloor}
\def\rf#1{\left\lceil#1\right\rceil}
\def\le{\leqslant}
\def\ge{\geqslant}
\def\eps{\varepsilon}
\def\mand{\qquad\mbox{and}\qquad}

\def\sssum{\mathop{\sum\ \sum\ \sum}}
\def\ssum{\mathop{\sum\, \sum}}
\def\ssumw{\mathop{\sum\qquad \sum}}

\def\vec#1{\mathbf{#1}}
\def\inv#1{\overline{#1}}
\def\num#1{\mathrm{num}(#1)}
\def\dist{\mathrm{dist}}

\def\fA{{\mathfrak A}}
\def\fB{{\mathfrak B}}
\def\fC{{\mathfrak C}}
\def\fU{{\mathfrak U}}
\def\fV{{\mathfrak V}}

\newcommand{\bflambda}{{\boldsymbol{\lambda}}}
\newcommand{\bfxi}{{\boldsymbol{\xi}}}
\newcommand{\bfrho}{{\boldsymbol{\rho}}}
\newcommand{\bfnu}{{\boldsymbol{\nu}}}

\def\GL{\mathrm{GL}}
\def\SL{\mathrm{SL}}

\def\Hba{\overline{\cH}_{a,m}}
\def\Hta{\widetilde{\cH}_{a,m}}
\def\Hb1{\overline{\cH}_{m}}
\def\Ht1{\widetilde{\cH}_{m}}
\def\tz{\widetilde z}
\def\tX{\widetilde X}
\def\tx{\widetilde x}
\def\tu{\widetilde u}
\def\tv{\widetilde v}
\def\tw{\widetilde w}

\def\flp#1{{\left\langle#1\right\rangle}_p}
\def\flm#1{{\left\langle#1\right\rangle}_m}
\def\dmod#1#2{\left\|#1\right\|_{#2}}
\def\dmodq#1{\left\|#1\right\|_q}

\def\Zm{\Z/m\Z}

\def\Err{{\mathbf{E}}}

\newcommand{\abs}[1]{ \left\lvert#1\right\rvert} 
\newcommand{\norm}[1]{\left\lVert#1\right\rVert} 

\newcommand{\comm}[1]{\marginpar{%
\vskip-\baselineskip 
\raggedright\footnotesize
\itshape\hrule\smallskip#1\par\smallskip\hrule}}

\def\xxx{\vskip5pt\hrule\vskip5pt}


\title{ A new sum-product estimate in prime fields}

\author{Changhao Chen, Bryce Kerr, and  Ali Mohammadi }

\address{Department of Pure Mathematics, University of New South Wales, Sydney, NSW 2052, Australia} 
\email{changhao.chen@unsw.edu.au}  \email{bryce.kerr@unsw.edu.au}

\address{
School of Mathematics and Statistics, University of Sydney, NSW 2006, Australia}
\email{alim@maths.usyd.edu.au}




\pagenumbering{arabic}

\begin{abstract} 
In this paper we obtain a new sum-product estimate in prime fields. In particular, we show that if $A\subseteq \F_p$ satisfies $|A|\le p^{64/117}$ then 
$$
\max\{|A\pm A|, |AA|\} \gtrsim |A|^{39/32}.
$$
Our argument builds on and improves some recent results of Shakan and Shkredov which use the eigenvalue method to reduce to estimating a fourth moment energy and the additive energy $E^+(P)$  of some subset $P\subseteq A+A$. Our main novelty comes from reducing the estimation of $E^+(P)$ to a point-plane incidence bound of Rudnev rather than a point line incidence bound of Stevens and de Zeeuw as done by Shakan and Shkredov.
\end{abstract}

\maketitle

\section{Introduction}
Let $p$ denote a prime number and $\F_p$ the finite field of order $p$. Given a subset $A\subseteq \F_p$ we define the sum set and product set of $A$ respectively by
$$A+A=\{a+b :  a,b\in A\} \quad \text{and}\quad AA=\{ab : a,b\in A\}.$$
The sum-product theorem in $\F_p$ due to Bourgain, Katz and Tao~\cite{BKT} states that for all $0<\varepsilon<1$ there exists some $\delta>0$ such that if $p^{\varepsilon}<|A|<p^{1-\varepsilon}$ then
\begin{align}
\label{eq:sumproductfp}
\max\{|AA|,|A+A|\}\ge |A|^{1+\delta},
\end{align}
and  Glibichuk and Konyagin \cite{GliKon} have shown that the condition $p^{\varepsilon}<|A|$ may be dropped.

The sum-product problem was first considered by Erd\H{o}s and Szemer\'{e}di~\cite{ES} over the integers whose work led to the conjecture that for any $\varepsilon>0$ and finite subset $A\subseteq \R$ we have 
\begin{align*}
\max\{|AA|,|A+A|\}\gg |A|^{2-\varepsilon},
\end{align*}
with implied constant depending only on $\varepsilon$. The sharpest sum-product result over $\R$ is due to Shakan~\cite{Sha}. 

By a construction due to Chang~\cite{Cha}, for any $N\leq p$ there exists a subset $A \subseteq \F_p$ with $|A| = N$ such that
\begin{equation}
\label{eqnSPUB}
\max\{|A+A|, |AA|\} \ll p^{1/2}N^{1/2},
\end{equation}
and hence the Erd\H{o}s and Szemer\'{e}di conjecture cannot be true in full generality in $\F_p$. We expect the  conjecture to be true in $\F_p$ if we restrict to sets of sufficiently small cardinality  and an active field of research is to determine the largest possible $\delta$ such that~\eqref{eq:sumproductfp} holds. The first explicit sum product result in $\F_p$ is due to Garaev~\cite{Gar} which has had a number of improvements  since, see~\cite{BG,KaSh,Li,Rud1}. A major breakthrough came from the work of Roche-Newton, Rudnev and Shkredov~\cite{RNRuShk} which is based on Rudnev's point plane incidence bound~\cite{Rud} and states that if $|A|\le p^{5/8}$ then
\begin{align}
\label{eq:sp1}
\max\{|A+A|,|AA|\}\gg |A|^{6/5}.
\end{align}
We note that the idea of applying geometric incidence estimates to sum-product type problems is due to Elekes~\cite{El}. Stevens and de Zeeuw have provided a different proof of the estimate~\eqref{eq:sp1} using their point line incidence bound. Recently, Shakan and Shkredov~\cite[Theorem~1.3]{ShaShk} have broken the $6/5$ barrier for the sum-product problem over $\F_p$ and have shown that if $|A|\le p^{3/5}$ then
\begin{align*}
\max\{|A\pm A|,|AA|\}\gtrsim |A|^{6/5+4/305}.
\end{align*}
We note that their condition for $|A|<p^{3/5}$ can be extended to $|A|<p^{2/3}$, see Remark \ref{rem:remark} for more details. See also~\cite{MPRRS} for variations on the sum-product theorem including sharper results for the few sums many products problem, see~\cite{MRSS} for the few products many sums problem and~\cite{MRS} for various other results related to expanders in prime fields.
\newline 

In this paper we obtain a new sum-product estimate over $\F_p$ which improves on the result of Shakan and Shkredov stated above. Our proof builds on techniques from~\cite{ShaShk} which use the eigenvalue method, see~\cite{Sh}, to reduce to estimating a fourth moment energy $E^+_4(A,B)$ which counts the number of solutions to the equations
$$a_1-b_1=a_2-b_2=a_3-b_3=a_4-b_4, \quad a_i\in A, \ \ b_i\in B,$$
and the additive energy $E^+(P)$ of some subset $P\subseteq A+A$. Shakan and Shkredov reduce both $E_4(A,B)$ and $E^+(P)$ to the point line incidence bound of Stevens and de Zeeuw and our improvement comes from estimating $E^+(P)$ via Rudnev's point plane incidence bound.
\subsection*{Asymptotic notation}
For positive real numbers $X$ and $Y$, we use $X\ll Y$ and $Y\gg X$ to imply existence of an absolute constant $C>0$ such that $X \leq CY$. We also use $X\lesssim Y$ and $Y\gtrsim X$ to mean that there exists an absolute constant $C>0$ such that $X\ll (\log{X})^CY.$

\section{Main results}

Our first result provides an improvement on the sum product estimate of Shakan and Shkredov~\cite[Theorem~1.3]{ShaShk}.
\begin{theorem}
\label{thm:one}
Suppose $A\subset \F_p $ satisfies  
$$|A|\le p^{64/117}.$$ Then we have  
$$
\max\{|A\pm A|, |AA|\} \gtrsim |A|^{39/32}.
$$
\end{theorem}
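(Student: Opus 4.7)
The plan is to follow the Shakan--Shkredov framework from \cite{ShaShk}. Assume for contradiction that $\max\{|A\pm A|,|AA|\} = M|A|$ for some $M$, with the aim of forcing $M \gtrsim |A|^{7/32}$. The eigenvalue method of Shkredov \cite{Sh} reduces such an inequality to a combination of two energy bounds: an upper bound on the mixed fourth moment
$$E_4^+(A,B) = \left|\{(a_i,b_i)_{i=1}^4 \in (A\times B)^4 : a_1-b_1 = a_2-b_2 = a_3-b_3 = a_4-b_4\}\right|$$
for an auxiliary set $B$ built from the multiplicative structure of $A$, and an upper bound on the additive energy $E^+(P)$ of a popular level set $P = \{s\in A+A : r_{A+A}(s) \asymp \tau\}$ for a dyadic parameter $\tau$.

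For $E_4^+(A,B)$ I would reuse the point-line incidence bound of Stevens and de Zeeuw, exactly as in \cite{ShaShk}. The new ingredient is the estimation of $E^+(P)$ via Rudnev's point-plane incidence bound. Since each $p\in P$ admits $\asymp \tau$ representations $p=a+a'$ with $a,a'\in A$, any solution $p_1+p_2 = p_3+p_4$ in $P^4$ lifts, after expanding three of the four $p_i$, to an identity
$$a_1 + a_1' - a_3 - a_3' + p_2 = a_4 + a_4',$$
which is the incidence of a point $(a_1,a_3,a_3')\in A^3$ with a plane indexed by a quadruple $(a_1',a_4,a_4',p_2) \in A^3\times P$. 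After dyadic pigeonholing and a Cauchy--Schwarz rearrangement of weights, Rudnev's theorem produces an estimate of $\tau^{\kappa}E^+(P)$ (for a suitable integer $\kappa$) whose main term is of the order $(nm)^{3/4}$ with $n$, $m$ controlled by $|A|$, $|A+A|$ and $\tau$. This is sharper than what the point-line bound of Stevens--de Zeeuw yields in the range of interest.

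Substituting both energy estimates into the inequality produced by the eigenvalue method and optimizing over $\tau$ and the remaining dyadic parameters should give $M \gtrsim |A|^{7/32}$, and hence the stated exponent $39/32$. The restriction $|A|\le p^{64/117}$ arises from the requirement that the $p$-saturated term $nm/p$ in Rudnev's bound remain subdominant to the main term $(nm)^{3/4}$; tracking how $n$ and $m$ grow with $|A|$, $|A+A|$, $\tau$ and $M$ pins down the threshold $64/117$.

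The main obstacle I expect is the bookkeeping of parameters. The eigenvalue method introduces several dyadic levels, and the improvement from $6/5+4/305$ to $39/32$ emerges only after a delicate balancing between the Rudnev contribution to $E^+(P)$, the Stevens--de Zeeuw contribution to $E_4^+(A,B)$, and the trivial energy estimates. Verifying that every incidence application stays simultaneously in its main-term regime, and that the multiplicative hypothesis needed to construct $B$ is preserved through the various reductions, will be the principal technical task.
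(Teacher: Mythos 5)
Your high-level description matches the abstract: keep the Shakan--Shkredov / eigenvalue framework, keep Stevens--de Zeeuw for the fourth moment $E_4^+$, and replace the point-line bound by Rudnev's point-plane bound in the estimate of $E^+(P)$. But the way you propose to get from $E^+(P)$ to a point-plane incidence is not the paper's route, and as sketched it would not work. Expanding three of the four $p_i$ in a solution $p_1+p_2=p_3+p_4$ as $p_i = a_i + a_i'$ yields the purely additive identity you wrote, and when you try to read it as an incidence between a point $(a_1,a_3,a_3')$ and a ``plane'' parametrised by $(a_1',a_4,a_4',p_2)$, every plane has the same normal vector $(1,-1,-1)$. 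A family of parallel planes is maximally degenerate and Rudnev's incidence theorem gives nothing beyond the trivial count there; his main term beats the trivial bound only when the planes vary, which in sum-product applications always requires some multiplicative ingredient in the incidence equation.

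The paper supplies that ingredient through a different chain. It first uses Pl\"unnecke--Ruzsa (Lemma~\ref{lem:PlRu}) to pass to a dense subset $B\subset A$ with $|B+B+B|\ll |A+A|^2/|A|$, defines the popular set $P\subset B+B$ with the fixed threshold $r_{B+B}(x)\ge |B|^2/(2|B+B|)$ rather than a dyadic level, and then bounds
$$E^+(P)\lesssim \frac{|B+B|^2}{|B|^4}\,\Delta^2\,E^+(B,T),$$
where $T\subset B+B+B$ is a dyadic level set of $B*P$. The key step is to estimate $E^+(B,T)$ by the Roche-Newton--Rudnev--Shkredov energy bound (Lemma~\ref{lem:ERRS}), which is the clean encapsulation of Rudnev's point-plane theorem with the multiplicative pivot $Y=Z=B$; this is where $|BB|$ enters. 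Your sketch omits the Pl\"unnecke--Ruzsa reduction entirely, and without it the condition needed for Lemma~\ref{lem:ERRS}, namely $|B||B+B+B||BB|\le p^2$, cannot be traced back to the hypothesis $|A|\le p^{64/117}$. Finally, the paper handles the cardinality threshold by a dichotomy on whether $|A+A|^2|AA|\le p^2$ or not, not by a single optimisation over dyadic parameters. So while you correctly named the two incidence tools and the energy decomposition, the actual reduction of $E^+(P)$ to something Rudnev's theorem can bite on --- the technical heart of the paper --- is missing from your proposal, and the direct lifting you suggest fails for the reason above.
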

In the case of difference set we obtain an estimate of the same strength with weaker conditions on the cardinality of $A$.
\begin{theorem}
\label{thm:SP}
Suppose $A\subset \F_p $ satisfies $|A|\ll p^{32/55}$. Then we have 
$$
\max\{|A-A|, |AA|\} \gtrsim |A|^{39/32}.
$$
\end{theorem}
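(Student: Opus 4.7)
\medskip
\noindent\textbf{Proof proposal.} The plan is to follow the strategy of Theorem~\ref{thm:one} while exploiting the additional symmetry of the difference set to relax the condition on $|A|$. Assume that $\max\{|A-A|,|AA|\}\le K|A|$; the goal is to deduce $K\gtrsim |A|^{7/32}$. Following the eigenvalue method of Shkredov as used in~\cite{ShaShk}, I would first perform a dyadic pigeonhole on the representation function $r_{A-A}(x)=|\{(a,b)\in A^2: a-b=x\}|$ to extract a popular level set $P\subseteq A-A$ and a value $\Delta$ such that $r_{A-A}(x)\asymp\Delta$ for $x\in P$ and $|P|\Delta^2$ captures a substantial fraction of the additive energy $E^+(A)$.

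The argument then reduces, via two applications of Cauchy--Schwarz on the convolution operator whose spectral radius is controlled by an auxiliary set $B$ derived from the multiplicative structure of $A$, to simultaneous upper bounds on (a) the fourth moment energy $E_4^+(A,B)$ counting the solutions of $a_i-b_i=a_j-b_j$ with $a_i\in A$, $b_i\in B$, and (b) the additive energy $E^+(P)$ of the popular difference set. The bound on $E_4^+(A,B)$ would be imported essentially unchanged from~\cite{ShaShk}. The key novelty, mirroring Theorem~\ref{thm:one}, is to bound $E^+(P)$ by means of Rudnev's point--plane incidence inequality: writing each $p_i\in P$ in the form $a_i-b_i$ with $a_i,b_i\in A$, the energy equation $p_1+p_2=p_3+p_4$ can be rearranged, via the multiplicative hypothesis on $AA$, into a system of incidences between points and a family of affine planes in $\F_p^3$. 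Combining the two energy estimates with the eigenvalue extraction and optimising over $\Delta$ gives the desired lower bound for $K$.

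The main obstacle, and the source of the constraint $|A|\ll p^{32/55}$, is keeping Rudnev's bound in its non-degenerate regime, which forces the number of points in the incidence configuration (a quantity expressible in terms of $|A|$ and $K$) to remain at most $p^2$. The improvement over the threshold $p^{64/117}$ of Theorem~\ref{thm:one} stems from the fact that the difference set argument can be carried out purely in terms of $A-A$, without the need to convert between $|A+A|$ and $|A-A|$ through a Pl\"unnecke--Ruzsa step. In the $\pm$ case this conversion costs an additional factor of a power of $K$ in the size threshold, tightening the admissible range of $|A|$; avoiding it yields the weaker condition $p^{32/55}$.
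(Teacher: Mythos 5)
The paper's proof of Theorem~\ref{thm:SP} does not follow the Theorem~\ref{thm:one} template at all: there is no dyadic pigeonhole, no popular level set $P$, no appeal to Lemma~\ref{lem:S} (the eigenvalue/Cauchy--Schwarz extraction), and no bound on a fourth-moment energy $E_4^+(A,B)$. Instead the paper invokes the Konyagin--Rudnev third-moment identity (Lemma~\ref{lem:KR}), which states $|A|^8/|A-A|\ll E_3^+(A)\,E^+(A,A-A)$, and then simply plugs in Lemma~\ref{lem:lower} for $E_3^+(A)\lesssim |AA|^{4/3}|A|^2$ and Corollary~\ref{cor:ERRS} for $E^+(A,A-A)\ll(|A-A||AA|)^{3/2}|A|^{-1/2}$; this immediately gives $|A|^{39}\lesssim|A-A|^{15}|AA|^{17}$, and the admissibility condition $|A-A||AA||A|\le p^2$ for Corollary~\ref{cor:ERRS} is what produces the threshold $p^{32/55}$. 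Your proposal omits Lemma~\ref{lem:KR} entirely, and that is the decisive missing ingredient.

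This is not merely a stylistic difference: if you actually run the Theorem~\ref{thm:one} machinery on $A-A$, the bound on $E^+(P)$ (with $P\subseteq A-A$) still passes through an $E^+(A,T)$ estimate where $T$ lives in a triple iterated set such as $2A-A$, whose size you must control by Pl\"unnecke--Ruzsa; the resulting admissibility condition for the point--plane bound is of the form $|A-A|^2|AA|\le p^2$, which yields exactly the Theorem~\ref{thm:one} threshold $|A|\le p^{64/117}$, not $p^{32/55}$. So your claim that ``the difference set argument can be carried out purely in terms of $A-A$, without the need for a Pl\"unnecke--Ruzsa step'' is not true for the route you describe, and your explanation of where $p^{32/55}$ comes from is incorrect. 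The genuine source of the relaxation is that Lemma~\ref{lem:KR} couples $|A-A|$ directly to $E^+(A,A-A)$, a two-fold rather than three-fold sumset, so the requisite condition $|A-A||AA||A|\le p^2$ is one power of a sumset weaker; nothing in the eigenvalue-plus-$E_4^+$ framework you outline reproduces that saving.
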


We can obtain sharper estimates in the case of iterated sumsets. The case $k=3$ below agrees with an estimate of Roche-Newton, Rudnev and Shkredov~\cite[Corollary~12]{RNRuShk}.
\begin{theorem}
\label{thm:two}
Let $k\ge 3$ be an integer and suppose $A\subseteq \F_p$ satisfies 
$$
|A|\le p^{(4-3\times 2^{-k})/(7-16\times2^{-k})}.
$$ 
Then we have 
\begin{align*}
\max\{|kA|,|AA|\} \gtrsim |A|^{(5-2^{3-k})/(4-3\times 2^{1-k})}.
\end{align*}
\end{theorem}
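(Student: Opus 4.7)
The plan is to follow the same framework as in the proof of Theorem~\ref{thm:one}: reduce the sum-product question to estimating an energy of $A$, obtain a lower bound on that energy from $|AA|$ being small, and obtain a matching upper bound using Rudnev's point-plane incidence bound and the additive structure coming from $|kA|$ being small. What is new in Theorem~\ref{thm:two} is that, with the iterated sumset $kA$ at our disposal, we can work with a higher-order additive energy and gain a $2^{-k}$-type factor in the exponents; in the limit $k\to\infty$ this produces the exponent $5/4$.

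Set $K=|AA|/|A|$ and $L=|kA|/|A|$, which we may assume are both small. By the Pl\"unnecke--Ruzsa inequality, $|kA|\le L|A|$ implies $|iA-jA|\ll L^{O(1)}|A|$ for every $i,j\ge 0$ with $i+j\le k$; in particular $|mA|\ll L|A|$ for all $m\le k$. Taking $m=2^{k-2}$ and considering the higher additive energy
\begin{align*}
E_m^+(A)=\sum_x r_{mA}^+(x)^2, \qquad r_{mA}^+(x)=|\{(a_1,\dots,a_m)\in A^m:a_1+\cdots+a_m=x\}|,
\end{align*}
Cauchy--Schwarz provides the lower bound
\begin{align*}
E_m^+(A)\ge \frac{|A|^{2m}}{|mA|}\gtrsim \frac{|A|^{2m-1}}{L}.
\end{align*}
For the matching upper bound, the plan is to encode the $m$-fold sum as point-plane incidences in $\F_p^3$, with planes parameterized using the multiplicative structure of $AA$ (bringing $K$ into play) and points by $(m-1)$-fold sums of $A$ (bringing $L$ into play), in the spirit of~\cite{RNRuShk,Rud} and the eigenvalue approach of~\cite{Sh}. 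Under the hypothesis $|A|\le p^{(4-3\cdot 2^{-k})/(7-16\cdot 2^{-k})}$, the total number of points and planes stays below $p^2$, placing us in the effective range of Rudnev's theorem and yielding
\begin{align*}
E_m^+(A)\lesssim |A|^{\alpha_k}K^{\beta_k}L^{\gamma_k}
\end{align*}
with exponents $\alpha_k,\beta_k,\gamma_k$ depending linearly on $2^{-k}$.

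Comparing the two bounds on $E_m^+(A)$ gives an inequality of the form $K^{\beta_k}L^{\gamma_k+1}\gtrsim |A|^{2m-1-\alpha_k}$, which after a short calculation rearranges to $\max\{K,L\}\gtrsim |A|^{(1-2^{1-k})/(4-3\cdot 2^{1-k})}$ and hence to the claimed bound on $\max\{|kA|,|AA|\}$. The hard part is the bookkeeping in the incidence step: one must translate the additive structure of $kA$ into a point-plane configuration whose Rudnev count gives precisely the exponents $\alpha_k,\beta_k,\gamma_k$ needed, propagating the $L$-bounds from Step~1 through all $k-2$ levels of dyadic pigeonholing. The choice $m=2^{k-2}$ is what makes the iteration of the Rudnev-type bound close up, and the cardinality hypothesis in the theorem is exactly what guarantees that each application of Rudnev's bound lies inside its non-trivial range.
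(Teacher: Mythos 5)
There is a genuine gap here, and it starts with the choice of the key quantity. The paper works with $T_k(A)$, the number of solutions to $a_1+\dots+a_k=a_{k+1}+\dots+a_{2k}$ with the \emph{same} $k$ as in the theorem; the Cauchy--Schwarz lower bound $|A|^{2k}\le |kA|\,T_k(A)$ then brings in $|kA|$ directly, with no Pl\"unnecke step needed. You instead propose working with $E_m^+(A)=T_m(A)$ for $m=2^{k-2}$, and this already fails for $k\ge 5$: there $m>k$, so from $|kA|\le L|A|$ alone one cannot conclude $|mA|\lesssim L|A|$. The best Pl\"unnecke--Ruzsa gives (via $|A+A|\le|kA|\le L|A|$) is $|mA|\le L^{m}|A|=L^{2^{k-2}}|A|$, and that exponentially large power of $L$ would ruin the final exponent, not merely complicate it. Your intuition that ``$m=2^{k-2}$ makes the iteration close up'' misidentifies where the $2^{-k}$ comes from.

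The actual source of the $2^{-k}$ factors is an \emph{iterative} energy inequality (Lemma~\ref{lem:EAA}), proved by dyadically decomposing the representation function $r$ for $(k-1)$-fold sums and applying Lemma~\ref{lem:ERRS} (Rudnev) to $E^+(A,J(i_0))$ with $J(i_0)\subseteq(k-1)A$. This yields
$$T_k(A)\lesssim |A|^{k-3/2}T_{k-1}(A)^{1/2}|AA|^{3/2}+|A|^{2k-3}|AA|+\frac{T_{k-1}(A)|AA|^2}{|A|},$$
and iterating the main term from $j=k$ down to $j=2$ gives the geometric series $1+\tfrac12+\dots+\tfrac{1}{2^{k-3}}=2-2^{3-k}$ in the exponent of $|AA|/|A|$, with the tail $T_2(A)=E^+(A)$ controlled by Corollary~\ref{cor:ERRS1}. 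Your sketch replaces this entire iteration with a single asserted bound $E_m^+(A)\lesssim |A|^{\alpha_k}K^{\beta_k}L^{\gamma_k}$ ``from point-plane incidences'' with unspecified $\alpha_k,\beta_k,\gamma_k$; that assertion is precisely the hard part, and a single application of Rudnev will not produce the $2^{-k}$ exponents. You are also missing the case analysis on which of the three terms in the recursion dominates (the paper handles two further subcases, both of which give stronger exponents), and the condition \eqref{eq:JAcardcond1} under which Rudnev applies, namely $|A||(k-1)A||AA|\le p^2$, which is what the cardinality hypothesis in the theorem is calibrated against.
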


\section{Preliminaries}
Given subsets $A,B\subseteq \F_p$, let
\begin{align*}
r_{A\pm B}(x)=\left|\{ (a,b)\in A\times B \ : \ a\pm b=x \}\right|,
\end{align*}
and for an integer $k$ define
\begin{align*}
E_k^+(A,B)=\sum_{x\in A-B}r_{A-B}(x)^k.
\end{align*}
We sometimes  write   $\sum_{x}$  to represent $\sum_{x\in \F_p}$ for convenience when the context is clear. For $A\subset \F_p$, we let $A(x)$ denote the characteristic function of $A$. We can write $r_{A+B}(x)$ as the convolution of functions $A$ and $B$, that is 
$$r_{A+B}(x)=(A*B)(x).$$

We write simply $E_k^+(A)$ instead of $E_k^+(A,A)$ and use $E^{+}(A, B)$ to denote $E_{2}^{+}(A, B)$, which we refer to as the additive energy between $A$ and $B$. Note that $E_k^+(A,B)$ counts the number of solutions to the equations
$$
a_1-b_1=\dots=a_k-b_k, \quad a_1,\dots,a_k\in A, \quad b_1,\dots,b_k\in B.
$$

The following is due to Shkredov~\cite[Proposition~31]{Sh}, see also~\cite[Lemma~6.1]{ShaShk}.
\begin{lemma}
\label{lem:S}
For any subset $A\subset \mathbb{F}_{p}$ and any $P\subset A-A$ we have  
$$
\left(\sum_{x\in P} r_{A-A}(x) \right)^{8}\leq |A|^{8}E_{4}^{+}(A)E^{+}(P).
$$
Similarly, for any $ P \subset A+A$ the following holds
$$
\left(\sum_{x\in P} r_{A+A}(x) \right)^{8}\leq |A|^{8}E_{4}^{+}(A)E^{+}(P).
$$
\end{lemma}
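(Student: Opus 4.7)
The plan is to apply the Cauchy--Schwarz inequality three times in succession, each doubling the power of $\sigma$ on the left-hand side. Set $\sigma := \sum_{x \in P} r_{A-A}(x) = |\{(a,b) \in A \times A : a-b \in P\}| = \sum_{a \in A} g(a)$, where $g(a) := |\{b \in A : a-b \in P\}|$. A first application of Cauchy--Schwarz gives $\sigma^{2} \le |A| \sum_{a \in A} g(a)^{2}$, and a second application yields $\bigl(\sum_{a \in A} g(a)^{2}\bigr)^{2} \le |A| \sum_{a \in A} g(a)^{4}$. Combining these, one obtains $\sigma^{4} \le |A|^{3} M$, where
$$M := \sum_{a \in A} g(a)^{4} = |\{(a, b_{1}, b_{2}, b_{3}, b_{4}) \in A^{5} : a - b_{i} \in P \text{ for } i=1,\ldots,4\}|.$$

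The crucial step is then to establish $M^{2} \le |A|^{2} E_{4}^{+}(A) E^{+}(P)$; combining this with $\sigma^{4} \le |A|^{3} M$ yields $\sigma^{8} \le |A|^{6} M^{2} \le |A|^{8} E_{4}^{+}(A) E^{+}(P)$ as required. To bound $M^{2}$, I would reparametrise $M$ via $p_{i} := a - b_{i} \in P$, viewing $M$ as $|\{(a, p_{1}, \ldots, p_{4}) \in A \times P^{4} : a - p_{i} \in A\}|$. Another Cauchy--Schwarz, applied after decoupling the indices into the pairs $\{1,2\}$ and $\{3,4\}$ (using that the five-fold intersection $|A \cap (A+p_{1}) \cap \cdots \cap (A+p_{4})|$ is controlled by the geometric mean of the two-fold intersections $|A \cap (A+p_{1}) \cap (A+p_{2})|$ and $|A \cap (A+p_{3}) \cap (A+p_{4})|$), reduces the estimate to a quantity of the form $\sum_{y} r_{A-A}(y)^{2} r_{P-P}(y)$, up to controlled factors of $|A|$. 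A final application of Cauchy--Schwarz,
$$\Bigl(\sum_{y} r_{A-A}(y)^{2} r_{P-P}(y)\Bigr)^{2} \le \Bigl(\sum_{y} r_{A-A}(y)^{4}\Bigr)\Bigl(\sum_{y} r_{P-P}(y)^{2}\Bigr) = E_{4}^{+}(A) E^{+}(P),$$
then completes the argument. The case $P \subseteq A+A$ is handled identically with $+$ in place of $-$ throughout.

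The main obstacle I expect is identifying the correct splitting in the intermediate Cauchy--Schwarz, so that $E_{4}^{+}(A)$ and $E^{+}(P)$ (rather than, say, $E^{+}(A) E_{4}^{+}(P)$) emerge on the right with the right balance, and tracking carefully the factors of $|A|$ that accumulate from the successive applications of Cauchy--Schwarz. The key insight that makes this work is that, after reparametrising via $p_{i} = a - b_{i}$, the differences $b_{i} - b_{j} = p_{j} - p_{i}$ (for $i \ne j$) automatically lie in $P - P$; it is this hidden structure which allows the additive energy $E^{+}(P)$ to surface alongside $E_{4}^{+}(A)$ in the final estimate.
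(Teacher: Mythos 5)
The paper does not prove Lemma~\ref{lem:S} at all: it is quoted directly from Shkredov~\cite[Proposition~31]{Sh} (see also~\cite[Lemma~6.1]{ShaShk}), where it is established by the spectral/``eigenvalue'' method, i.e.\ by estimating the top singular value of the operator with kernel $T(a,b) = {\bf 1}_P(a-b)$ on $A \times A$ through higher trace (Schatten) norms. Your proposal attempts an elementary Cauchy--Schwarz substitute, so the relevant comparison is between your sketch and Shkredov's spectral argument; the two are genuinely different in spirit.

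Your first two steps are correct and clean: with $g(a) = |\{b \in A : a - b \in P\}|$ one indeed gets $\sigma^{4} \leq |A|^{3} M$ with $M = \sum_{a \in A} g(a)^{4}$, and the final Cauchy--Schwarz $\bigl(\sum_{y} r_{A-A}(y)^{2} r_{P-P}(y)\bigr)^{2} \leq E_{4}^{+}(A)\, E^{+}(P)$ is also fine. The gap is entirely in the middle: you do not actually derive $M \leq |A| \sum_{y} r_{A-A}(y)^{2} r_{P-P}(y)$, and the mechanism you describe does not produce it. If one follows the geometric-mean decoupling $|A \cap (A+p_{1}) \cap \cdots \cap (A+p_{4})| \leq N(p_{1},p_{2})^{1/2} N(p_{3},p_{4})^{1/2}$, one arrives at $M \leq \bigl(\sum_{p_{1},p_{2} \in P} N(p_{1},p_{2})^{1/2}\bigr)^{2}$, and every natural further application of Cauchy--Schwarz on that sum produces quantities with the \emph{wrong} balance of exponents: for instance, using $N(p_{1},p_{2}) \leq r_{A-A}(p_{1}-p_{2})$ gives $M \leq \bigl(\sum_{y} r_{P-P}(y)\, r_{A-A}(y)^{1/2}\bigr)^{2}$, and Cauchy--Schwarz on this yields $M \leq |A|^{2} E^{+}(P)$ (or, with a different split, factors of $|P|$ or $|A-A|$), none of which implies the needed $M \leq |A|\,(E_{4}^{+}(A)E^{+}(P))^{1/2}$. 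The issue is that the decoupling halves the exponent on the $A$--side intersection, so one never recovers the fourth power of $r_{A-A}$ demanded by $E_{4}^{+}(A)$; the observation that $b_{i} - b_{j} = p_{j} - p_{i} \in P - P$ is genuine and useful, but it alone does not push $r_{A-A}$ up to the second power with only a single loss of $|A|$. Thus the ``main obstacle'' you flag at the end is exactly where the argument currently breaks: as written, the middle Cauchy--Schwarz is not pinned down to anything that proves $M^{2} \leq |A|^{2} E_{4}^{+}(A) E^{+}(P)$, and the route sketched appears to lead elsewhere. A complete elementary proof would need a different decoupling (or an additional idea), or else one should fall back on Shkredov's operator-norm argument.
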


We will also require a third moment estimate of Konyagin and Rudnev \cite[Corollary 10]{KR}.
\begin{lemma}
\label{lem:KR}
For any subset $A\subset \F_{p}$ we have 
$$
\frac{|A|^{8}}{|A-A|}\ll E^{+}_{3}(A)E^{+}(A, A-A).
$$
\end{lemma}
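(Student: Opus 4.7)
Set $r(x) = r_{A-A}(x)$, so $E_3^+(A) = \sum_x r(x)^3$. The first step is to rewrite $E^+(A,A-A)$ in a form amenable to Cauchy--Schwarz against $E_3^+(A)$. Expanding the defining count and reorganizing the $4$-tuples $(a_1,d_1,a_2,d_2) \in A^2 \times (A-A)^2$ with $a_1-d_1=a_2-d_2$ according to the common value $u = a_1-a_2 = d_1-d_2$ yields the identity
\[
E^+(A,A-A) \;=\; \sum_u r(u)\, N(u), \qquad N(u) \;=\; |\{(d_1,d_2)\in(A-A)^2 : d_1-d_2=u\}|.
\]

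The plan is then a single application of the Cauchy--Schwarz inequality with the split $r(u)^2 N(u)^{1/2} = r(u)^{3/2}\cdot r(u)^{1/2} N(u)^{1/2}$, giving
\[
\Bigl(\sum_u r(u)^2 N(u)^{1/2}\Bigr)^{\!2} \;\le\; E_3^+(A)\cdot E^+(A,A-A),
\]
so that it suffices to establish the lower bound
\[
\sum_u r(u)^2 N(u)^{1/2} \;\gg\; \frac{|A|^4}{|A-A|^{1/2}}.
\]

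To prove this lower bound, I would translate $A$ so that $0\in A$, which gives the pointwise inequality $N(u)\ge r(u)$ for $u\in A-A$: any pair $(a_1,a_2)\in A^2$ with $a_1-a_2=u$ lies inside $(A-A)^2$ and has the same difference. The strategy is to combine this pointwise inequality with the averaged identity $\sum_u N(u) = |A-A|^2$ and a Plünnecke--Ruzsa bound on the support $|2A-2A|$ of $N$, splitting $A-A$ into dyadic level sets according to the joint sizes of $r(u)$ and $N(u)$.

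The main obstacle is executing this coupling cleanly. The crude use of $N(u)\ge r(u)$ followed by Jensen's inequality only gives $\sum_u r(u)^2 N(u)^{1/2} \ge \sum_u r(u)^{5/2}\gg |A|^5/|A-A|^{3/2}$, which falls short of the target by a factor of $|A-A|/|A|$. Recovering this loss requires exploiting that $N(u)$ is actually of order $|A-A|^2/|2A-2A|$ on the ``popular'' $u$, a quantity which Plünnecke--Ruzsa controls from below; reconciling this averaged information with the pointwise bound across all dyadic levels of $r$, so as to handle uniformly both the ``random-like'' regime (where the $u=0$ term dominates) and the ``AP-like'' regime (where the generic terms dominate), is the technical heart of the proof.
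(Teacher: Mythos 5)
The paper does not prove this lemma itself; it cites it directly from Konyagin--Rudnev \cite{KR} (their Corollary~10), so there is no in-paper proof to compare against. Assessing your proposal on its own merits: the opening moves are correct. The identity
\[
E^+(A,A-A)=\sum_u r(u)\,N(u),\qquad N(u)=|(A-A)\cap((A-A)+u)|,
\]
is right, and the Cauchy--Schwarz split $r^2N^{1/2}=r^{3/2}\cdot r^{1/2}N^{1/2}$ does give
$\bigl(\sum_u r(u)^2N(u)^{1/2}\bigr)^2\le E_3^+(A)\,E^+(A,A-A)$. So the lemma would follow from the lower bound $\sum_u r(u)^2N(u)^{1/2}\gg |A|^4|A-A|^{-1/2}$.

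The problem is that this lower bound is asserted, not proved, and you yourself flag the coupling of the two regimes as ``the technical heart'' without supplying it. That is a genuine gap, not a routine verification. Worse, the specific ingredients you propose do not appear sufficient. The pointwise bound $N(u)\ge r(u)$ (or the stronger $N(u)\ge|A|$ for $u\in A-A$, which follows since $A-a_2\subseteq(A-A)\cap((A-A)+u)$ whenever $u=a_1-a_2$) gives $\sum_u r^2N^{1/2}\ge|A|^{1/2}E^+(A)$, and the Pl\"unnecke route $N(u)\gtrsim|A-A|^2/|2A-2A|\ge|A|^3/|A-A|^2$ gives $\sum_u r^2N^{1/2}\gtrsim |A|^{3/2}|A-A|^{-1}E^+(A)$; plugging in $E^+(A)\ge|A|^4/|A-A|$, \emph{both} of these reach the target only when $|A-A|\lesssim|A|$, i.e. only in the AP-like regime. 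Even the sharper pointwise estimate $N(u)\ge|A'-A|\ge r(u)^2|A|^2/E^+(A)$ (with $A'=A\cap(A+u)$) leads to $\sum_u r^2N^{1/2}\ge |A|E_3^+(A)/E^+(A)^{1/2}$, which again only closes when $|A-A|\lesssim|A|$ after the available lower bounds on $E_3^+(A)$ are inserted. So the reduction is plausible (it checks out on the extreme Sidon and AP examples and on the hybrid $A=\text{AP}+\text{Sidon}$), but you have not exhibited a proof of it, and the tools you name in the sketch are demonstrably not enough on their own to cover the intermediate range $|A|\ll|A-A|\ll|A|^2$. As it stands the proposal is an incomplete outline with the decisive step missing.
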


Next, we recall a variation of the Pl\"{u}nnecke-Ruzsa inequality, which can be found in \cite{KaSh}. 
\begin{lemma}
\label{lem:PlRu}
Let $X, B_1, \dots, B_k \subseteq \F_p$. Then for any $0 < \epsilon < 1$ there exists a subset $X^{'} \subseteq X$, with $|X^{'}| \geq (1-\epsilon)|X|$ such that $$|X^{'} +B_1 + \cdots + B_k| \ll_{\epsilon} \frac{|X + B_1| \cdots |X + B_k|}{|X|^{k-1}}.$$
\end{lemma}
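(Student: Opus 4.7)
The lemma is an asymmetric, large-subset form of the Pl\"unnecke--Ruzsa inequality, and my plan is to derive it from the corresponding nonempty-subset analogue by an iterative extraction argument.

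First I would establish the nonempty-subset version: for any finite $X, B_1, \dots, B_k \subseteq \F_p$, there exists a nonempty $X_0 \subseteq X$ with
$$|X_0 + B_1 + \cdots + B_k| \leq \frac{|X + B_1| \cdots |X + B_k|}{|X|^{k-1}}.$$
This is a standard consequence of Petridis' proof of Pl\"unnecke--Ruzsa: one chooses $X_0 \subseteq X$ minimizing $|X_0 + B_1|/|X_0|$ and then applies the Petridis inequality $|X_0 + B_1 + C| \leq (|X_0+B_1|/|X_0|)\, |X_0 + C|$ to peel off the sets $B_i$ one at a time, combined with induction on $k$.

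Next I would upgrade to the large-subset form via iteration. Set $Y_0 = \emptyset$; at stage $j$ apply the nonempty-subset version to the residual $X \setminus Y_{j-1}$ to obtain nonempty $X_j \subseteq X \setminus Y_{j-1}$ satisfying the analogous bound with $|X \setminus Y_{j-1}|$ in place of $|X|$. Set $Y_j = Y_{j-1} \cup X_j$ and halt once $|Y_J| \geq (1-\epsilon)|X|$, taking $X' = Y_J$. By the triangle inequality,
$$|X' + B_1 + \cdots + B_k| \leq \sum_{j=1}^J |X_j + B_1 + \cdots + B_k|,$$
and since $|X \setminus Y_{j-1}| \geq \epsilon |X|$ for all $j \leq J$, each term in the sum is at most $\epsilon^{-(k-1)} \prod_i |X + B_i| / |X|^{k-1}$, which has the desired shape.

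The main obstacle is controlling the number of iterations $J$ by a quantity depending only on $\epsilon$, since the Petridis-minimizing $X_j$ is a priori only guaranteed to be nonempty and could in principle be a single point. I would resolve this via a greedy refinement: at each stage choose $X_j$ to maximize $|X_j|$ among all subsets of $X \setminus Y_{j-1}$ realizing the Petridis minimum, or equivalently work with the layered Pl\"unnecke graph construction, which guarantees $|X_j| \geq c_\epsilon |X \setminus Y_{j-1}|$ for some $c_\epsilon > 0$. This forces $J = O_\epsilon(1)$, and the resulting constant, polynomial in $1/\epsilon$, is absorbed into the $\ll_\epsilon$ notation in the statement.
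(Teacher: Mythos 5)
The paper does not prove this lemma; it simply cites Katz--Shen \cite{KaSh}, so there is no in-paper argument to compare against. Your overall strategy --- iteratively extracting Pl\"unnecke--Ruzsa subsets from the residual $X\setminus Y_{j-1}$ and taking $X'=\bigcup_j X_j$ --- is in fact the standard route to this lemma. You have also correctly identified the crux: one must ensure the union bound $\sum_j |X_j+B_1+\cdots+B_k|$ does not blow up with the number of stages $J$.

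The problem is that your proposed resolution does not work, and the genuine fix is different. It is simply not true that the Petridis-minimizing set (even a maximal one), or the minimizer in the layered Pl\"unnecke graph, occupies a fixed proportion $c_\epsilon$ of the ambient set. A standard example: take $X=\{0,\dots,m\}\cup\{2N,4N,\dots,2nN\}$ with $N$ huge, $m$ tiny, $n$ large, and $B=\{0,1\}$; then the subset minimizing $|X_0+B|/|X_0|$ is essentially $\{0,\dots,m\}$, a vanishing fraction of $X$. So $J$ cannot be bounded by $O_\epsilon(1)$, and with your stated ``nonempty version'' --- whose right-hand side $\prod_i|X+B_i|/|X|^{k-1}$ does not scale with $|X_0|$ --- the union bound really does lose a factor of $J$. (Your induction sketch for that nonempty version also does not close as written: Petridis peeling replaces $X_0$ by a further subset at each step, so you end up bounding a sumset of a different, smaller set.) The correct ingredient is the sharper form of the nonempty Pl\"unnecke--Ruzsa inequality for different summands, due to Gyarmati--Matolcsi--Ruzsa (also derivable via Petridis): there exists nonempty $X_0\subseteq X$ with
\begin{equation*}
|X_0+B_1+\cdots+B_k|\ \le\ |X_0|\prod_{i=1}^k\frac{|X+B_i|}{|X|}.
\end{equation*}
With this, each stage of your iteration gives $|X_j+B_1+\cdots+B_k|\le \epsilon^{-k}\bigl(\prod_i|X+B_i|/|X|\bigr)\,|X_j|$ (using $|X\setminus Y_{j-1}|\ge\epsilon|X|$), and summing over $j$ the factor $\sum_j|X_j|=|X'|\le|X|$ absorbs the multiplicity, yielding $|X'+B_1+\cdots+B_k|\le\epsilon^{-k}\prod_i|X+B_i|/|X|^{k-1}$ regardless of how large $J$ is. In short: the iteration count is irrelevant once the single-stage bound is made proportional to $|X_j|$; bounding $J$ is neither possible nor needed.
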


The following  point-line incidence bound is due to Stevens and de Zeeuw~\cite{StevZeeuw}, see also~\cite[Lemma 12]{Shkredov}. 

\begin{lemma}\label{lem:SZ}
Let $P=X\times Y$ be a subset of $\F_{p}^{2}$ and $L$ be a collection of lines in $\F_{p}^{2}$. 
Then 
$$I(P, L)\ll |X|^{3/4}|Y|^{1/2}|L|^{3/4}+|L|+|P|+\frac{|L||P|}{p}.$$ 
\end{lemma}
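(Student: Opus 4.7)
The plan is to prove Lemma~\ref{lem:SZ} by transferring the point--line incidence problem in $\F_p^2$ to a point--plane incidence problem in $\F_p^3$ and invoking Rudnev's theorem, following Stevens and de Zeeuw. Vertical lines in $L$ meeting $P = X \times Y$ contribute at most $|X|\cdot|Y| = |P|$ incidences in total and are absorbed into the $|P|$ error term. Each remaining line is written as $\ell: y = a_\ell x + b_\ell$, yielding a dual set $L^{*} = \{(a_\ell, b_\ell):\ell\in L\}\subset\F_p^2$ with $|L^*| = |L|$.

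The key observation is that the incidence condition $y - ax - b = 0$, while bilinear in the four variables $(x, y, a, b)$, is linear in $(y, a, b)$ once $x$ is fixed. Setting
$$
\mathcal{P} = Y \times L^{*} \subset \F_p^3, \qquad \Pi = \{\pi_x : x \in X\}, \qquad \pi_x:\ y - ax - b = 0,
$$
gives $|\mathcal{P}| = |Y||L|$, $|\Pi| = |X|$, and $I(\mathcal{P}, \Pi) = I(P, L)$ exactly. A direct application of Rudnev's bound to this configuration yields only $I(P, L) \ll |X||Y|^{1/2}|L|^{1/2}$, which falls short of the target by a factor $(|L|/|X|)^{1/4}$ in the regime $|L| \leq |X|$. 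To recover the correct exponents I would apply Cauchy--Schwarz over $X$ so as to count pairs of lines of $L$ through common points of $P$, re-encode this enriched quantity as a point--plane incidence problem over a larger configuration of the form $Y \times L^{*} \times L^{*}$ (or a suitable projection thereof), apply Rudnev's bound to this richer input, and undo the Cauchy--Schwarz by taking a square root. The main term $|X|^{3/4}|Y|^{1/2}|L|^{3/4}$ emerges from this route, while the $+|L|$ term accounts for a popularity reduction on $L^{*}$ performed before invoking Rudnev, the $+|P|$ term accounts for the vertical lines set aside at the outset, and $|L||P|/p$ is inherited from the $|\mathcal{P}||\Pi|/p$ residual in Rudnev's bound.

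The principal obstacle is controlling the collinearity parameter $k$, namely the maximum number of points of the $3$D configuration lying on any line of $\F_p^3$. Because $\mathcal{P}$ is of product form $Y \times L^{*}$, collinear $3$D configurations reflect hidden structure in $L^{*}$ (for example, many lines of $L$ sharing a common slope, which cause long alignments in a vertical plane of $\F_p^3$), and these must be removed by an explicit pigeonhole-based popularity reduction on $L^{*}$ whose cost is absorbed into the $|L|$ error term. The asymmetry between the exponents $|X|^{3/4}$ and $|Y|^{1/2}$ stems from applying Cauchy--Schwarz over only one of the two cartesian factors of $P$; choosing the right factor, and ensuring the popularity reduction interacts cleanly with this Cauchy--Schwarz step without leaving stray logarithmic losses beyond those hidden in the $\lesssim$ notation, is the main technical difficulty.
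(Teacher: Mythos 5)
The paper does not prove Lemma~\ref{lem:SZ}; it simply cites Stevens and de Zeeuw~\cite{StevZeeuw} (and \cite[Lemma 12]{Shkredov}), so there is no in-paper proof to compare against. Judged as a sketch of the cited proof, your high-level plan is correct: one does remove vertical lines (absorbed into $|P|$), dualize the surviving lines to $L^*\subset\F_p^2$, square by Cauchy--Schwarz, recast the resulting collision count as a point--plane incidence problem in $\F_p^3$, and invoke Rudnev after a popularity/pigeonhole reduction to control the maximal collinearity (absorbed into $|L|$). You also correctly diagnose that a naive single application of Rudnev to $Y\times L^*$ against planes $\{\pi_x\}_{x\in X}$ falls short.

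There is, however, a concrete error at the exponent-setting step. You propose applying Cauchy--Schwarz over $X$. Writing $m(x)=|\{\ell\in L:\ell(x)\in Y\}|$, this gives $I^2\le |X|\sum_{x}m(x)^2$, and after eliminating $x$ the natural point--plane encoding has $|\mathcal P|=|\Pi|=|L||Y|$ (points $(a_{\ell'},b_{\ell'},y')$, planes indexed by $(a_\ell,b_\ell,y)$), so Rudnev yields $I\ll |X|^{1/2}|Y|^{3/4}|L|^{3/4}$ --- the bound with the roles of $X$ and $Y$ swapped, not the one stated. To obtain $|X|^{3/4}|Y|^{1/2}|L|^{3/4}$ one must instead Cauchy--Schwarz over $Y$: set $r(y)=|\{(x,\ell)\in X\times L:\ell(x)=y\}|$, so $I^2\le |Y|\sum_y r(y)^2 = |Y|\cdot|\{(x,x',\ell,\ell'):a_\ell x+b_\ell=a_{\ell'}x'+b_{\ell'}\}|$, and encode the latter with points $(a_\ell,b_\ell,x')\in L^*\times X$ against planes $\pi_{x,a',b'}\colon ux+v-a'w-b'=0$ indexed by $X\times L^*$, giving $|\mathcal P|=|\Pi|=|X||L|$ and hence $Q\ll(|X||L|)^{3/2}+k|X||L|$ by Rudnev, which is exactly what produces the stated main term. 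Relatedly, the ``configuration $Y\times L^*\times L^*$'' you mention is four-dimensional and does not directly match a point--plane incidence setup; the correct configuration is $L^*\times X$ against $X\times L^*$. You do hedge with ``choosing the right factor \dots is the main technical difficulty,'' which shows awareness, but the specific choice you make would prove a different (permuted) version of the lemma, and the remaining steps are left at the level of intent rather than executed.
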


\begin{remark}
Using Lemma~\ref{lem:SZ} and a technique due to Elekes~\cite{El}, as outlined in \cite[Corollary~9]{StevZeeuw}, one obtains the estimate 
$$\max\{|A+A|, |AA|\}\gg |A|^{6/5}$$ 
for any set $A\subset \F_p$ under the condition $|A| \ll p^{5/7}$. It is worth noting that this improves on the condition $|A| \leq p^{5/8}$, which was obtained in \cite{RNRuShk} and \cite{StevZeeuw}. Furthermore, by \eqref{eqnSPUB}, it is easy to see that this condition is optimal up to some constant.
\end{remark}

The following is due to Shakan and Shkredov~\cite[Proposition~3.1]{ShaShk} and is based on Lemma~\ref{lem:SZ}. We note that their condition on the cardinality  $|A|<p^{3/5}$ can be extend to $|A|<p^{2/3}$ and we provide details of this extension.
\begin{lemma}
\label{lem:SS} 
Let $A \subset \F_{p}$ satisfy $|A|<p^{\frac{2}{3}}$. Then for any subset $B\subset \F_{p}$ we have 
$$
E_{4}^{+}(A, B)\lesssim |B|^{3}|AA|^{2}|A|^{-1}.
$$
\end{lemma}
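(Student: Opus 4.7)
\medskip

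The plan is to follow the structure of Shakan--Shkredov~\cite[Proposition~3.1]{ShaShk}, namely a dyadic pigeonholing on $r_{A-B}$ followed by an application of the Stevens--de Zeeuw bound, and to extract the sharper range $|A|<p^{2/3}$ by a more careful examination of the error term in Lemma~\ref{lem:SZ}. First, writing $E_4^+(A,B)=\sum_t r_{A-B}(t)^4$ and pigeonholing dyadically, one locates a scale $\lambda$ and a level set $P\subseteq A-B$ such that $r_{A-B}(t)\sim\lambda$ for $t\in P$ and
$$E_4^+(A,B)\lesssim \lambda^4|P|.$$

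Next, I would set up an incidence problem in which the multiplicative structure of $AA$ is used to convert the additive information into point-line incidences. Take the point set $\cP=B\times AA$ and the line set $\cL=\{\ell_{c,t}:y=c(x+t),\ c\in A,\ t\in P\}$. For each pair $(a,b)\in A\times B$ with $a-b=t$, the point $(b,c(b+t))=(b,ca)$ belongs to $\cP\cap \ell_{c,t}$, so each line carries at least $r_{A-B}(t)\sim\lambda$ incidences; distinct $(c,t)\in A\times P$ produce distinct lines, so $|\cL|=|A||P|$ and $I(\cP,\cL)\gtrsim |A||P|\lambda$. Applying Lemma~\ref{lem:SZ} with $X=B$, $Y=AA$ yields
$$|A||P|\lambda \ll |B|^{3/4}|AA|^{1/2}(|A||P|)^{3/4}+|A||P|+|B||AA|+\frac{|A||P||B||AA|}{p}.$$

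One of the four terms on the right must dominate. The first term rearranges to $\lambda^4|P|\lesssim |B|^3|AA|^2/|A|$, which is the target. The second gives $\lambda=O(1)$, from which the claim is immediate. The third gives $\lambda |A|\ll |B||AA|$, and combining with the trivial $\lambda\le\min(|A|,|B|)$ delivers $\lambda^4|P|\ll \lambda^3 \cdot |B||AA|/|A|\ll|B|^3|AA|^2/|A|$ after a short computation. The fourth (the $p$-term) gives $\lambda\ll|B||AA|/p$, and the straightforward bound $\lambda^4|P|\le \lambda^3\cdot \lambda|P|\le\lambda^3|A||B|$ then produces $\lambda^4|P|\ll|A||B|^4|AA|^3/p^3$, which is admissible exactly when $|A|^2|B||AA|\ll p^3$.

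The main obstacle is the last case: the condition $|A|^2|B||AA|\ll p^3$ is automatic under the Shakan--Shkredov hypothesis $|A|\le p^{3/5}$ (using $|B|,|AA|\le|A|^2$) but not under $|A|\le p^{2/3}$ for arbitrary $B$. To reach the claimed range I would refine Step~4 in this regime by re-running the incidence argument after replacing $B$ with a Pl\"unnecke--Ruzsa-selected subset $B'\subseteq B$ furnished by Lemma~\ref{lem:PlRu}, for which the product $|B'||AA|$ is controlled; equivalently, by separately handling the subcase where $|B||AA|$ is too large, in which the trivial estimate $\lambda^2|P|\le E^+(A,B)\le |A||B|\min(|A|,|B|)$ combined with the upper bound $\lambda\ll|B||AA|/p$ from Case~4 suffices. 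A delicate balancing of these subcases, carried out so that the worst residual constraint becomes $|A|^3\ll p^{2}$ rather than $|A|^5\ll p^3$, is what gives the extended range $|A|<p^{2/3}$.
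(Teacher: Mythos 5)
Your incidence setup is a valid dual of the paper's: you place the level set $P=D_\tau$ into the line family (points $B\times AA$, lines indexed by $A\times P$), while the paper places it in the point set (points $D_\tau\times AA$, lines indexed by $A\times B$). Both give $I\gg |A||P|\lambda$, and because of how the exponents in Lemma~\ref{lem:SZ} happen to line up, your first term $|B|^{3/4}|AA|^{1/2}(|A||P|)^{3/4}$ rearranges to exactly the target, and your Cases~2 and~3 close after the short computations you indicate (in Case~3 one needs $\lambda^3\le |B|^2|AA|$, which follows from $\lambda\le |B|$ and $\lambda\le|A|\le|AA|$).

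The genuine gap is Case~4. Bounding $\lambda^4|P|\le \lambda^3(\lambda|P|)\le\lambda^3|A||B|$ and inserting $\lambda\ll |B||AA|/p$ produces the constraint $|A|^2|B||AA|\ll p^3$, which, as you observe, is not implied by $|A|<p^{2/3}$ for arbitrary $B$. Your closing paragraph names two possible repairs but carries out neither. The Pl\"unnecke--Ruzsa route is not straightforward here because $B$ is arbitrary and not tied to $A$. Your second suggestion is the right one and is precisely what the paper does, and it works verbatim in your dual setup: replace the first-moment bound $\lambda|P|\le|A||B|$ by the second-moment bound $\lambda^2|P|\le E^+(A,B)\le|A|^2|B|$, and split on whether $\lambda\le |B||AA||A|^{-3/2}$. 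If so,
$$\lambda^4|P|=\lambda^2\big(\lambda^2|P|\big)\le\big(|B||AA||A|^{-3/2}\big)^2\cdot|A|^2|B|=\frac{|B|^3|AA|^2}{|A|},$$
which is the target. If not, combining $\lambda>|B||AA||A|^{-3/2}$ with $\lambda\ll|B||AA|/p$ from Case~4 forces $p\ll|A|^{3/2}$, contradicting $|A|<p^{2/3}$. This is exactly where the hypothesis enters. Your final sentence asserts that the ``worst residual constraint'' becomes $|A|^3\ll p^2$, which is indeed what this computation yields, but you state it as an outcome of unspecified ``delicate balancing'' rather than giving the argument.
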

\begin{proof}
 Taking a dyadic decomposition of $r_{A-B}(x)$, there exists a real number $\tau$ such that defining
$$D_{\tau}=\{x\in A-B: \tau\leq r_{A-B}(x)<2\tau\},$$ 
we have 
\begin{align}
\label{eq:E4dyadic}
E_4^+(A,B)=\sum_{x}r_{A-B}(x)^{4}\lesssim \tau^{4}|D_{\tau}|,
\end{align}
and 
\begin{equation}\label{eq:dyadic}
\tau |D_{\tau}|\ll |A||B|, \quad  \tau^{2}|D_{\tau}|\ll E^+(A, B).
\end{equation}

Consider the set of points $P=D_{\tau}\times AA$ and the set of lines
$L=\{\ell_{a,b}: a \in A, b \in B\}$
where 
$$\ell_{a, b}=\{(x, y)\in \F_{p}^{2}: y=(x+b)a\}.$$  For any $a \in A$ and $b\in B$ we have 
$$|\ell_{a, b} \cap P |\geq \sum_{a_{1}\in A}{\bf 1}_{D_{\tau}}(a_{1}-b).$$
Thus we obtain
\begin{align*}
I(P, L)&=\sum_{a\in A, b\in B}| \ell_{a, b} \cap P|\\
&\geq \sum_{a\in A}\sum_{ a_{1}\in A, b\in B} {\bf 1}_{D_{\tau}}(a_{1}-b)\\
&=\sum_{a\in A}\sum_{x\in D_{\tau}} r_{A-B}(x)\\
&\gg  |A||D_{\tau}| \tau.
\end{align*}
Combining with  Lemma \ref{lem:SZ} we conclude that
\begin{equation}\label{eq:incidence}
\begin{aligned}
|A||D_{\tau}|\tau&\ll |D_{\tau}|^{3/4}|AA|^{1/2}(|A||B|)^{3/4}
\\&+|D_{\tau}||AA|+|A||B|+\frac{|D_{\tau}||AA||A||B|}{p}.
\end{aligned}
\end{equation}
We next proceed on a case by case basis depending on which term in \eqref{eq:incidence} dominates.
Suppose the first dominates, so that 
$$|A||D_{\tau}|\tau\ll |D_{\tau}|^{3/4}|AA|^{1/2}(|A||B|)^{3/4},$$
which gives the desired result after combining with~\eqref{eq:E4dyadic}.

Suppose that the second term in~\eqref{eq:incidence} dominates. This implies that 
$$|A||D_{\tau}|\tau \ll |D_{\tau}||AA|,$$
and hence $$\tau \ll |AA|/|A|.$$ Combining with \eqref{eq:dyadic} and using the trivial bound
$$E^+(A,B)\le |A||B|^2,$$
 we get
$$\tau^{4}|D_{\tau}| =\tau^{2} E^+(A,B)\ll |B|^{2}|AA|^{2}|A|^{-1}.$$

If the third term in~\eqref{eq:incidence} dominates, then we have 
$$\tau |D_{\tau}|\ll |B|,$$
so that using the trivial bound  $\tau\leq \min\{ |A|, |B|\}$, we obtain
$$\tau^{4}|D_{\tau}| = \tau^{3} \tau |D_{\tau}|\ll  |B|^{3}|A|\ll|B |^{3}|AA|^{2}|A|^{-1}.$$

Finally consider when  the last term in~\eqref{eq:incidence} dominates, so that
\begin{equation}\label{eq:tau}
p\tau\ll |B||AA|.
\end{equation}
If $$\tau \leq |AA||B||A|^{-3/2},$$ then 
$$
|D_{\tau}| \tau^{4}=|D_{\tau}|\tau^{2} \tau^{2}\ll |A|^{2}|B| |AA|^{2}|B|^{2}|A|^{-3},
$$
which gives the desired result. Otherwise, suppose $$\tau > |AA||B||A|^{-3/2},$$ which combined with \eqref{eq:tau} implies that 
$$
p|AA||B||A|^{-3/2}\ll |B||AA|,
$$ 
and contradicts our assumption $|A|<p^{2/3}.$
\end{proof}

\begin{remark}
\label{rem:remark}
Combining Lemma~\ref{lem:SS} with \cite[Theorem 2.5]{ShaShk} leads to the same sum-product estimate as~\cite[Theorem~1.3]{ShaShk} with the weaker condition $|A|<p^{2/3}$.  
\end{remark}

Using H\"{o}lder's inequality and Lemma \ref{lem:SS} we obtain the following third moment estimate which will be used in the proof of Theorem~\ref{thm:SP}.

\begin{lemma}
\label{lem:lower} 
For any subset $A\subset \F_p$ satisfying $|A|<p^{\frac{2}{3}}$ we have  
$$
E^{+}_{3}(A)\lesssim |AA|^{\frac{4}{3}}|A|^2.
$$
\end{lemma}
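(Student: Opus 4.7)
The plan is to bound $E_3^+(A)$ by interpolating between the trivial first-moment identity and the fourth-moment bound supplied by Lemma \ref{lem:SS}.

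Writing $r(x) = r_{A-A}(x)$, I would split $r(x)^3 = r(x)^{1/3} \cdot r(x)^{8/3}$ and apply Hölder's inequality with conjugate exponents $p = 3$ and $q = 3/2$. This gives
$$
E_3^+(A) = \sum_x r(x)^3 \le \left(\sum_x r(x)\right)^{1/3}\left(\sum_x r(x)^4\right)^{2/3} = |A|^{2/3}\, E_4^+(A)^{2/3},
$$
using the trivial identity $\sum_x r_{A-A}(x) = |A|^2$. (The choice of exponents is determined by log-convexity of $\ell^p$ norms applied at the values $1$, $3$, $4$.)

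Next I would apply Lemma \ref{lem:SS} with $B = A$, which is permitted since the hypothesis $|A| < p^{2/3}$ is exactly the one required there. This yields
$$
E_4^+(A) \lesssim |A|^3\, |AA|^2\, |A|^{-1} = |A|^2\, |AA|^2,
$$
and hence $E_4^+(A)^{2/3} \lesssim |A|^{4/3}\, |AA|^{4/3}$. Combining with the Hölder bound above gives
$$
E_3^+(A) \lesssim |A|^{2/3}\cdot |A|^{4/3}\, |AA|^{4/3} = |AA|^{4/3}\, |A|^2,
$$
which is the desired estimate.

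There is no real obstacle: the only substantive ingredient is Lemma \ref{lem:SS}, and the rest is the standard log-convexity interpolation. The minor point to get right is choosing the Hölder weights so that the two resulting factors are exactly $\|r\|_1$ and $\|r\|_4^4$, and this is forced by the target exponents $|A|^2$ and $|AA|^{4/3}$ together with the shape of Lemma \ref{lem:SS}.
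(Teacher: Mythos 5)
Your proof is correct and follows essentially the same route as the paper: the same splitting $r^3 = r^{1/3}\cdot r^{8/3}$, the same Hölder exponents $(3,3/2)$, and the same application of Lemma~\ref{lem:SS} with $B=A$. Nothing to add.
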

\begin{proof}

Writing 
$$
E^{+}_{3}(A)=\sum_{x}r_{A-A}(x)^{\frac{8}{3}+\frac{1}{3}},
$$
and applying  H\"older's inequality and Lemma \ref{lem:SS} gives
\begin{align*}
E^{+}_{3}(A)&\leq E^{+}_{4}(A)^{\frac{2}{3}}(|A||A|)^{\frac{1}{3}}\\
& \lesssim \left( |AA|^{2}|A|^{2}\right)^{\frac{2}{3}}|A|^{\frac{2}{3}},
\end{align*}
which is the desired result.
\end{proof}

The following is due to Roche-Newton, Rudnev and Shkredov~\cite[Theorem~6]{RNRuShk} and is based on Rudnev's point plane incidence bound~\cite{Rud}.
\begin{lemma}
\label{lem:ERRS}
Let $X, Y, Z \subset \F_p$ and let $M = \max\{|X|, |YZ|\}.$ Suppose that $|X||Y||YZ| \ll p^{2}$. Then we have
$$
E^{+}(X, Z) \ll (|X||YZ|)^{3/2}|Y|^{-1/2} + M|X||YZ||Y|^{-1}.
$$
\end{lemma}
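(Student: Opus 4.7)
My plan is to convert the estimation of $E^+(X, Z)$ into a point-plane incidence problem in $\F_p^3$ and apply Rudnev's incidence bound~\cite{Rud}, following the approach of Roche-Newton, Rudnev and Shkredov.

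The first step is a multiplicative lift. Since the equation $x_1 - z_1 = x_2 - z_2$ is equivalent to $yx_1 - yz_1 = yx_2 - yz_2$ for each $y \in Y$, setting $w_i = y z_i \in yZ \subseteq YZ$ and relaxing $w_i \in yZ$ to $w_i \in YZ$ yields
$$
|Y|\, E^+(X, Z) \le \left|\left\{(y, x_1, x_2, w_1, w_2) \in Y \times X^2 \times (YZ)^2 : yx_1 - w_1 = yx_2 - w_2\right\}\right|.
$$

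Next, I would interpret the right-hand side as the number of incidences $I(P, \Pi)$ between a point set $P \subset \F_p^3$ of cardinality at most $|X||Y||YZ|$, parameterized by $(y, x_1, w_1) \in Y \times X \times YZ$ via a map such as $(y, x_1, w_1) \mapsto (y x_1, y, w_1)$, and a family $\Pi$ of $|X||YZ|$ planes $\pi_{x_2, w_2}$ whose defining equations are linear in the chosen coordinates. The hypothesis $|X||Y||YZ| \ll p^2$ is precisely the size condition needed to invoke Rudnev's theorem, which yields an incidence estimate of the shape $I(P, \Pi) \ll (|X||Y||YZ|)^{1/2}|X||YZ| + k\,|X||YZ|$. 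Dividing by $|Y|$ produces the two summands $(|X||YZ|)^{3/2}|Y|^{-1/2}$ and $k|X||YZ|/|Y|$ appearing in the lemma.

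The main obstacle is controlling the degenerate-collinearity term $k$. A naive line-by-line analysis in $\F_p^3$ shows that lines along which the $y$-coordinate varies can contain up to $|Y|$ points of $P$, which exceeds $M$ in general. The resolution is to use the form of Rudnev's theorem that measures $k$ as the maximum number of planes of $\Pi$ sharing a common line rather than the maximum number of collinear points of $P$. Each plane $\pi_{x_2, w_2}$ has normal vector determined entirely by $x_2$, so a short pencil-of-planes analysis shows that any line in $\F_p^3$ lies in at most $|X| \le M$ of the planes of $\Pi$, yielding the required bound $k \le M$.
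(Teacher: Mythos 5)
This lemma is \emph{not} proved in the paper you are reading: the authors simply cite it as \cite[Theorem~6]{RNRuShk}, so there is no ``paper's proof'' to compare against. I will therefore assess your argument on its own terms.

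Your overall plan---a multiplicative lift of the energy equation followed by Rudnev's point--plane bound---is indeed the route taken in \cite{RNRuShk}, and steps one and two are set up correctly: the relaxation $w_i\in yZ\rightsquigarrow w_i\in YZ$ gives $|Y|\,E^+(X,Z)\le I(P,\Pi)$ with a point set $P$ of size at most $|X||Y||YZ|$ and a plane set $\Pi$ of size $|X||YZ|$ whose normals $(1,-x_2,-1)$ depend only on $x_2$, so that at most $|X|\le M$ of them share a common line. The gap is in step three, the incidence estimate itself. In your configuration $|P|\geq|\Pi|$, and Rudnev's theorem in its primal form requires $|P|\le|\Pi|$ (with $k$ counting collinear \emph{points}); the dual form, which applies when $|\Pi|\le|P|$ and measures $k$ as the maximum number of planes through a line, yields
$$
I(P,\Pi)\ll |\Pi|^{1/2}|P|+k|P|,
$$
not the hybrid $|P|^{1/2}|\Pi|+k|\Pi|$ that you invoke. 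Dividing the dual bound by $|Y|$ gives only $E^+(X,Z)\ll(|X||YZ|)^{3/2}+M|X||YZ|$, which is weaker than the claimed estimate by a factor of $|Y|^{1/2}$ in the main term and $|Y|$ in the error term. The mixed bound $|P|^{1/2}|\Pi|+k|\Pi|$ with $k$ the plane-collinearity parameter is not valid in general for $|P|>|\Pi|$: take $n$ collinear points and $m<n$ planes all through that line, so $I=nm$, $k=m$, yet $n^{1/2}m+m^2\ll nm$ fails. The $|Y|^{-1/2}$ saving is exactly the delicate point of this lemma, and your proof as written does not explain where it comes from; you would need to either exhibit the precise version of Rudnev's theorem that justifies the estimate in the asymmetric regime, or rework the point--plane encoding so that the smaller family plays the role of the point set.
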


\begin{cor}
\label{cor:ERRS}
Let $A \subset \F_p$. If $|A+A||AA||A|\ll p^{2}$ then 
\begin{align*}
\label{eqn:ERRS}
E^{+}(A, A+A)\ll (|A+A||AA|)^{3/2}|A|^{-1/2}.
\end{align*} 
Similarly, if $|A-A||AA||A|\ll p^{2}$, then
\begin{align*}
E^{+}(A,A-A) \ll&(|A-A||AA|)^{3/2}|A|^{-1/2}.
\end{align*}
\end{cor}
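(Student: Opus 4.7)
The plan is to apply Lemma~\ref{lem:ERRS} directly with a carefully chosen specialization of the sets $X$, $Y$, $Z$. For the first statement, I would take $X = A+A$, $Y = A$, and $Z = A$, so that $YZ = AA$. The hypothesis $|X||Y||YZ| \ll p^{2}$ then matches exactly the assumption $|A+A||AA||A| \ll p^{2}$, and since the additive energy is symmetric in its two arguments we have $E^{+}(A+A, A) = E^{+}(A, A+A)$. The difference set case is handled by the same choice with $X = A-A$ instead, noting that $YZ = AA$ is unchanged.

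Lemma~\ref{lem:ERRS} then yields
$$
E^{+}(A, A+A) \ll (|A+A||AA|)^{3/2}|A|^{-1/2} + M|A+A||AA||A|^{-1},
$$
where $M = \max\{|A+A|, |AA|\}$. The only remaining task is to absorb the second error term into the first, which after clearing denominators is equivalent to the inequality $M^{2} \ll |A+A||AA||A|$.

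This last inequality follows at once from the trivial cardinality bounds $|A+A|, |AA| \le |A|^{2}$ together with $|A+A|, |AA| \ge |A|$ (the latter from $A + a_{0} \subseteq A+A$ and $a_{0} A \subseteq AA$ for any fixed $a_{0} \in A$, taken to be nonzero in the second case). Indeed, if $M = |A+A|$ then $M^{2} \le |A+A| \cdot |A|^{2} \le |A+A||AA||A|$, and the case $M = |AA|$ is handled symmetrically; the difference set version is identical after replacing $A+A$ by $A-A$ throughout. Since the corollary reduces to a direct application of Lemma~\ref{lem:ERRS} together with this routine bookkeeping, there is no serious obstacle to overcome; the value of the statement is that it packages the bound in a clean form ready for use in the proofs of Theorems~\ref{thm:one}, \ref{thm:SP}, and \ref{thm:two}.
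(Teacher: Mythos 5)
Your proof is correct and takes essentially the same approach as the paper: both apply Lemma~\ref{lem:ERRS} with $X = A\pm A$, $Y = Z = A$, and then show the secondary term is dominated by the main term using the trivial bounds $|A| \le |A\pm A|, |AA| \le |A|^2$. The only cosmetic difference is that the paper expands the $\max$ into two separate terms and bounds each, whereas you keep the $\max$ and reduce to the single inequality $M^2 \le |A\pm A||AA||A|$.
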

\begin{proof}
We consider only $A+A$, a similar argument applies to $A-A$. Applying Lemma \ref{lem:ERRS} with
$$X=A+A, \quad  Y=Z=A,$$
gives
\begin{align*}
E^{+}(A,A+A) \ll &(|A+A||AA|)^{3/2}|A|^{-1/2} +  |A+A|^{2}|AA||A|^{-1} \\  &+ |A+A||AA|^{2}|A|^{-1}.
\end{align*}
Observe that  for any subset $A\subset \F_p$ we have
$$(|A+A||AA|)^{3/2}|A|^{-1/2}\geq |A+A|^{2}|AA||A|^{-1}, $$
and 
$$(|A+A||AA|)^{3/2}|A|^{-1/2}\geq |A+A||AA|^{2}|A|^{-1}.$$
Thus we finish the proof.
\end{proof}
\begin{cor}
\label{cor:ERRS1}
Let $A\subseteq \F_p$. If $|A|^2|AA|\ll p^{2}$ then
$$E^+(A)\ll |AA|^{3/2}|A|.$$
\end{cor}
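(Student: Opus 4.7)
The plan is to apply Lemma~\ref{lem:ERRS} with the triple $(X,Y,Z)=(A,A,A)$, mirroring the strategy used in the proof of Corollary~\ref{cor:ERRS} but with $A+A$ replaced throughout by $A$ itself. Under this choice we have $YZ=AA$, so the hypothesis $|X||Y||YZ|\ll p^{2}$ of Lemma~\ref{lem:ERRS} becomes exactly the assumption $|A|^{2}|AA|\ll p^{2}$ of the corollary.

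The next step is to identify the quantity $M=\max\{|X|,|YZ|\}=\max\{|A|,|AA|\}$. Since $|AA|\geq |A|$ for any nonempty finite $A\subseteq \F_{p}$ (e.g.\ by fixing a nonzero $a_{0}\in A$ and noting $a_{0}A\subseteq AA$, with a trivial separate treatment if $0\in A$), we obtain $M=|AA|$. Substituting into Lemma~\ref{lem:ERRS} then yields
$$E^{+}(A)\ll (|A||AA|)^{3/2}|A|^{-1/2}+|AA|\cdot|A|\cdot|AA|\cdot|A|^{-1}=|A||AA|^{3/2}+|AA|^{2}.$$

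Finally, I would absorb the second term into the first. The trivial inequality $|AA|\leq |A|^{2}$ gives $|AA|^{1/2}\leq |A|$, hence $|AA|^{2}=|AA|^{1/2}\cdot|AA|^{3/2}\leq |A|\cdot|AA|^{3/2}$, and the claimed bound $E^{+}(A)\ll |A||AA|^{3/2}$ follows. There is no real obstacle here: the corollary is a direct specialisation of Lemma~\ref{lem:ERRS}, entirely parallel to the derivation of Corollary~\ref{cor:ERRS}, the only minor point being the routine observation $|AA|\geq |A|$ used to pin down $M$.
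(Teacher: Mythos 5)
Your proof is correct and matches the intended argument: the paper states Corollary~\ref{cor:ERRS1} without proof, as an immediate specialisation of Lemma~\ref{lem:ERRS} parallel to Corollary~\ref{cor:ERRS}, and your choice $(X,Y,Z)=(A,A,A)$, the identification $M=|AA|$, and the absorption of $|AA|^{2}$ via $|AA|\le |A|^{2}$ are exactly the routine steps the authors leave implicit.
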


We will require an iterative inequality for higher order energies to be used in the proof of  Theorem~\ref{thm:two}.
\begin{lemma}
\label{lem:EAA}
For integer $k\ge 2$ and a subset $A\subseteq \F_q$ we let $T_k(A)$ count the number of solutions to the equation
\begin{align}
a_1+\dots+a_k=a_{k+1}+\dots+a_{2k}, \quad a_1,\dots,a_{2k}\in A.
\end{align} 
Suppose $A$ satisfies
\begin{align}
\label{eq:JAcardcond1}
|A||(k-1)A||AA|\le p^{2},
\end{align}
then we have 
\begin{align*}
T_k(A)\lesssim |A|^{k-3/2}T_{k-1}(A)^{1/2}|AA|^{3/2}+|A|^{2k-3}|AA|+\frac{T_{k-1}(A)|AA|^2}{|A|}.
\end{align*}
\end{lemma}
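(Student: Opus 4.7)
The plan is to bound $T_k(A)$ by a dyadic decomposition argument that reduces to a single invocation of the point-plane energy estimate in Lemma \ref{lem:ERRS}, mirroring the structure of the proof of Lemma \ref{lem:SS}.

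First, I would rewrite $T_k(A)$ in a form that exposes the convolution structure. Using $r_{kA}=r_{(k-1)A}\ast r_A$ and the substitution $y=x-a$,
$$T_k(A)=\sum_x r_{kA}(x)^2=\sum_{a,b\in A}\sum_y r_{(k-1)A}(y)\,r_{(k-1)A}(y+a-b).$$
Then I would dyadically decompose $r_{(k-1)A}$ over the level sets $P_\tau=\{y:\tau\le r_{(k-1)A}(y)<2\tau\}$, with $\tau$ ranging over $O(\log p)$ dyadic values. Setting
$$N(\tau_1,\tau_2)=\#\{(a,b,y,y'):y\in P_{\tau_1},\,y'\in P_{\tau_2},\,a+y=b+y'\}=\sum_z r_{A+P_{\tau_1}}(z)\,r_{A+P_{\tau_2}}(z),$$
Cauchy--Schwarz in $z$ gives $N(\tau_1,\tau_2)\le\sqrt{E^+(A,P_{\tau_1})\,E^+(A,P_{\tau_2})}$, and AM--GM collapses the resulting two-parameter sum to a single level, yielding
$$T_k(A)\lesssim \tau^2\,E^+(A,P_\tau)$$
for some dyadic $\tau$.

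Next, I would apply Lemma \ref{lem:ERRS} with $X=P_\tau$ and $Y=Z=A$. Since $P_\tau\subseteq (k-1)A$, the admissibility condition $|X||Y||YZ|\ll p^2$ is implied by \eqref{eq:JAcardcond1}, and we obtain
$$E^+(A,P_\tau)\ll (|P_\tau||AA|)^{3/2}|A|^{-1/2}+\max(|P_\tau|,|AA|)\,|P_\tau||AA||A|^{-1}.$$
To convert this into the three claimed terms I would use the elementary bounds
$$\tau|P_\tau|\le |A|^{k-1}\qquad\text{and}\qquad \tau^2|P_\tau|\le T_{k-1}(A),$$
which follow from $\sum_y r_{(k-1)A}(y)=|A|^{k-1}$ and $\sum_y r_{(k-1)A}(y)^2=T_{k-1}(A)$. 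Rewriting the first Rudnev term as $(\tau^2|P_\tau|)^{1/2}(\tau|P_\tau|)|AA|^{3/2}|A|^{-1/2}$ bounds it by $T_{k-1}(A)^{1/2}|A|^{k-3/2}|AA|^{3/2}$. The second Rudnev term splits on the value of the maximum: when $|P_\tau|\ge|AA|$, $(\tau|P_\tau|)^2\le|A|^{2(k-1)}$ produces the term $|A|^{2k-3}|AA|$; when $|P_\tau|<|AA|$, $\tau^2|P_\tau|\le T_{k-1}(A)$ produces the term $T_{k-1}(A)|AA|^2/|A|$.

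The main obstacle is purely organizational, namely collapsing the two-parameter dyadic sum down to a single scale $\tau$ without losing more than a polylogarithmic factor, and handling the case split on $\max(|P_\tau|,|AA|)$. No additional geometric input is required beyond the single application of Lemma \ref{lem:ERRS}, which is exactly the point-plane substitute for the point-line input used in Lemma \ref{lem:SS}.
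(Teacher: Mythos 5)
Your proposal is correct and essentially reproduces the paper's argument: both proofs dyadically decompose $r_{(k-1)A}$, pigeonhole to reduce $T_k(A)\lesssim \tau^2 E^+(A,P_\tau)$ for a single dyadic level $P_\tau\subseteq(k-1)A$, invoke Lemma~\ref{lem:ERRS} with $X=P_\tau$, $Y=Z=A$, and finish with the moment bounds $\tau|P_\tau|\le|A|^{k-1}$ and $\tau^2|P_\tau|\le T_{k-1}(A)$. The only cosmetic difference is the bookkeeping for collapsing to one scale: the paper applies Cauchy--Schwarz pointwise to $(A*r)(x)^2 \lesssim \sum_j 2^{2j}(A*J(j))(x)^2$ before summing in $x$, whereas you expand the square, bound the cross terms $N(\tau_1,\tau_2)$ by Cauchy--Schwarz, and then use AM--GM; both routes lose only the same polylogarithmic factor.
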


\begin{proof}
For $\lambda \in (k-1)A$ we define
\begin{align*}
r(\lambda)=|\{ (a_1,\dots,a_{k-1})\in A\times \dots \times A \ : a_1+\dots+a_{k-1}=\lambda\}|.
\end{align*}
Then we have 
$$T_{k}(A)=\sum_{x} (A* r)(x)^{2}.$$
Now we take a dyadic decomposition for $r$. For  integer $j\ge 1$ let 
$$J(j)=\{ \lambda \in (k-1)A \ : 2^{j-1}\le r(\lambda)<2^j\}.$$ 
Then 
$$(A* r)(x) \ll \sum_{1\leq j\leq \log |A|} 2^j (A*J(j))(x).$$
By Cauchy-Schwarz inequality 
$$(A* r)(x)^2\lesssim \sum_{1\leq j\leq \log |A|} 2^{2j} (A*J(j))(x)^{2}.$$
Thus 
\begin{align*}
T_k(A)\lesssim \sum_{1\leq j\leq \log |A|} \sum_{x }2^{2j} (A*J(j))(x)^{2},
\end{align*}
and hence there exists some  $1\le i_0\ll \log{|A|}$ such that 
\begin{align}\label{eq:NewTk}
T_k(A)\lesssim 2^{2i_{0}} E^{+}(A, J(i_{0})).
\end{align}
By Lemma \ref{lem:ERRS}
\begin{align}
\label{eq:newEJ0}
E^{+}(A, J(i_0)) &\ll (|J(i_0)||AA|)^{3/2}|A|^{-1/2} \\ \nonumber
&+ \max\{|J(i_0)|,|AA|\}|J(i_0)||AA||A|^{-1},
\end{align}
provided 
\begin{align}
\label{eq:JA}
|J(i_0)||A||AA|\le p^{2}.
\end{align}
Since $J(i_0)\subseteq (k-1)A,$ the inequality~\eqref{eq:JA} is satisfied by~\eqref{eq:JAcardcond1}. By~\eqref{eq:NewTk} and~\eqref{eq:newEJ0}
\begin{align*}
T_k(A)&\lesssim \frac{(2^{i_0}|J(i_0)|)(2^{i_0}|J(i_0)|^{1/2})|AA|^{3/2}}{|A|^{1/2}}+\frac{(2^{2i_0}|J(i_0)|^2)|AA|}{|A|} 
\\ & +\frac{(2^{2i_0}|J(i_0)|)|AA|^2}{|A|},
\end{align*}
and since 
\begin{align*}
2^{i_0}|J(i_0)|\ll |A|^{(k-1)}, \quad 2^{2i_0}|J(i_0)|\ll T_{k-1}(A),
\end{align*}
we get 
\begin{align*}
T_k(A)\lesssim |A|^{k-3/2}T_{k-1}(A)^{1/2}|AA|^{3/2}+|A|^{2k-3}|AA|+\frac{T_{k-1}(A)|AA|^2}{|A|},
\end{align*}
which completes the proof.
\end{proof}

\section{Proof of Theorem \ref{thm:one}}
We consider the case $A+A$, a similar argument applies to $A-A$.  Assuming $A$ satisfies 
\begin{align}
\label{eq:Acardcond1}
|A|\le p^{64/117},
\end{align}
we consider two cases. Suppose first that 
\begin{align}
\label{eq:onecase1}
|A+A|^2|AA|\ll p^{2}.
\end{align}
By Lemma~\ref{lem:PlRu}, we can identify a subset $B \subset A$ satifying
\begin{align}
\label{eq:A'dense}
|B|\gg |A|
\end{align}
 and
\begin{equation}
\label{eqn:PR3A}
|B+B+B| \ll \frac{|A+A|^{2}}{|A|}.
\end{equation}
By~\eqref{eq:A'dense}, in order to prove Theorem~\ref{thm:one} it is sufficient to show that 
$$\max\{|B+B|,|BB|\}\gtrsim |B|^{39/32}.$$
Let 
\begin{align}
\label{eq:Pdef}
P=\left\{x\in B+B: r_{B+B}(x)\geq \frac{1}{2}\frac{|B|^{2}}{|B+B|}\right\},
\end{align}
so that
$$\sum_{x\in P}r_{B+B}(x)\gg|B|^{2}.$$
Applying Lemma \ref{lem:S} we have 
$$|B|^{8}\ll E^{+}_{4}(B)E^{+}(P),$$
and by Lemma \ref{lem:SS}
\begin{equation}\label{eq:first}
 |B|^{6}\lesssim |BB|^{2} E^{+}(P).
 \end{equation} 
It remains to consider $E^{+}(P).$ Recalling~\eqref{eq:Pdef}, we see that for any $x\in \F_p$, 
$$\frac{|B|^{2}}{|B+B|}P(x)\ll (B*B)(x),$$
and hence 
$$(P*P)(x)\ll \frac{|B+B|}{|B|^{2}}(B*B*P)(x).$$
Thus 
$$E^+(P)=\sum_{x}(P*P)(x)^{2}\lesssim \frac{|B+B|^{2}}{|B|^{4}}\sum_{x}(B*B*P)(x)^{2}.$$
Taking a dyadic decomposition for the function $(B*P)(x)$, there exists some real number $\Delta$ satisfying
$$1\leq \Delta \leq |B|,$$
 such that defining
$$T=\{x\in B+P: \Delta\leq (B*P)(x)<2\Delta\},$$
we have
\begin{equation}
\label{eq:PP}
\sum_{x}(P*P)(x)^{2}\lesssim \frac{|B+B|^{2}}{|B|^{4}} \Delta^{2} \sum_{x} (B*T)(x)^{2}=\frac{|B+B|^{2}}{|B|^{4}} \Delta^{2}E^+(B,T).
\end{equation}
Since $T\subseteq B+B+B$, by~\eqref{eq:onecase1} and \eqref{eqn:PR3A}  we have
\begin{align}
\label{eq:Acond1}
|B||B+B+B||BB|\ll p^{2},
\end{align}
and hence by Lemma \ref{lem:ERRS}  
\begin{align}
\label{eq:energy}
E^{+}(B, T)\ll & |T|^{3/2}  |BB|^{3/2}|B|^{-1/2} +|T|^{2}|BB||B|^{-1}\\ \nonumber
&+ |T||BB|^{2}|B|^{-1}.
\end{align}
This gives
\begin{align*}
&\sum_{x}(P*P)(x)^{2}\lesssim \frac{|B+B|^{2}}{|B|^{4}}(\Delta|T|)(\Delta |T|^{1/2})|BB|^{3/2}|B|^{-1/2} \\ & \quad \quad +
\frac{|B+B|^{2}}{|B|^{4}}(\Delta|T|)^2|BB||B|^{-1}+\frac{|B+B|^{2}}{|B|^{4}}(\Delta^2|T|)|BB|^{2}|B|^{-1}.
\end{align*}
Since 
$$\Delta |T|\ll |B||P|, \quad  \Delta^{2} |T|\ll E^{+}(B, P),$$
and $P\subseteq B+B$ the above simplifies to 
\begin{align}
\label{eq:E+Pcases}
&E^+(P)\lesssim \frac{|B+B|^{3}|BB|^{3/2}E^+(B,B+B)^{1/2}}{|B|^{7/2}} \\ & \quad \quad +
\frac{|B+B|^{4}|BB|}{|B|^{3}}+\frac{|B+B|^{2}|BB|^2E^+(B,B+B)}{|B|^{5}}. \nonumber
\end{align}
We next proceed on a case by case basis depending on which term in~\eqref{eq:E+Pcases} dominates. Suppose first that 
\begin{align*}
E^+(P)\lesssim \frac{|B+B|^{3}|BB|^{3/2}E^+(B,B+B)^{1/2}}{|B|^{7/2}}.
\end{align*} 
The assumption~\eqref{eq:onecase1} implies that the conditions  of Corollary \ref{cor:ERRS} are satisfied and hence
\begin{align}
E^{+}(P) 
&\lesssim\frac{|B+B|^{15/4}|BB|^{9/4}}{|B|^{15/4}}.
\end{align}
Combining with \eqref{eq:first}  we obtain
\begin{equation}
|B|^{39}\lesssim |B+B|^{15}|BB|^{17},
\end{equation}
which gives the required result. 

 Suppose next that  
\begin{align}
E^{+}(P) & \lesssim \frac{|B+B|^{4}|BB|}{|B|^{3}}.
\end{align}
Combining with $\eqref{eq:first}$ we obtain 
$$|B|^{9}\lesssim |B+B|^{4}|BB|^{3},$$
which gives better bound than $39/32$.

Finally, consider when
\begin{align}
E^{+}(P) &\lesssim \frac{|B+B|^{2}|BB|^2E^+(B,B+B)}{|B|^{5}}.
\end{align}
By Corollary~\ref{cor:ERRS}  we have 
\begin{align}
E^{+}(P) & \lesssim\frac{|B+B|^{7/2}|BB|^{7/2}}{|B|^{11/2}},
\end{align}
and hence by \eqref{eq:first}  
$$|B|^{23}\lesssim |B+B|^{7}|BB|^{11},$$
which gives better bound than $39/32$ and this finishes the proof in the case 
$$|A+A|^2|AA|\le p^{2}.$$
 Suppose next that 
\begin{align*}
|A+A|^2|AA|\ge p^{2}.
\end{align*}
By~\eqref{eq:Acardcond1} 
\begin{align*}
|A+A|^2|AA|\ge |A|^{117/32},
\end{align*}
and hence 
$$\max\{|A+A|,|AA|\}\ge |A|^{39/32},$$
which completes the proof.
\section{Proof of Theroem~\ref{thm:SP}}
Suppose $A$ satisfies
\begin{align}
\label{eq:twoAcard}
|A|\le p^{32/55},
\end{align} 
and consider two cases. Suppose first that 
\begin{align*}
|A-A||AA|A|\le p^{2}.
\end{align*}
By Lemma~\ref{lem:KR}, Lemma~\ref{lem:lower} and Corollary~\ref{cor:ERRS}  we get
$$
\frac{|A|^{8}}{|A-A|} \ll (|A|^{2}|AA|^{4/3})(|A-A|^{3/2}|AA|^{3/2}|A|^{-1/2}),
$$
which reduces to 
$$
|A-A|^{15}|AA|^{17}\gg |A|^{39},
$$
and gives the required result. If
\begin{align*}
|A-A||AA||A|\ge p^{2},
\end{align*}
then by~\eqref{eq:twoAcard}
\begin{align*}
|A-A||AA|\ge |A|^{39/16},
\end{align*}
and gives the required result.
\section{Proof of Theorem \ref{thm:two}}
Let $A$ satisfy
\begin{align}
\label{eq:thmkAcond}
|A|\le p^{(4-3\times 2^{-k})/(7-16\times2^{-k})},
\end{align}
and consider two cases. Suppose first that 
\begin{align}
\label{eq:thmkcase1}
|A||(k-1)A||AA|\le p^2.
\end{align}
We fix an integer $k\ge 3$ and consider two subcases. Suppose first that for all integers $3\le j \le k$ we have 
\begin{align*}
|A|^{j-3/2}T_{j-1}(A)^{1/2}|AA|^{3/2}\ge \max\left\{|A|^{2j-3}|AA|,\frac{T_{j-1}(A)|AA|^2}{|A|}\right\}. 
\end{align*}
By~\eqref{eq:thmkcase1} and Lemma~\ref{lem:EAA}, this implies that for each $3\le j \le k$ we have 
\begin{align*}
T_j(A)\lesssim |A|^{j}\left(\frac{|AA|}{|A|}\right)^{3/2}T_{j-1}(A)^{1/2},
\end{align*}
and hence by induction on $j$
\begin{align*}
T_k(A)\lesssim |A|^{k+(k-1)/2+\dots+(k-j+1)/2^{j-1}}\left(\frac{|AA|}{|A|}\right)^{3/2(1+1/2+\dots+1/2^{j-1})}T_{k-j}(A)^{1/2^j}.
\end{align*}
Taking $j=k-2$ and using Corollary~\ref{cor:ERRS1} gives
\begin{align*}
T_k(A)&\lesssim |A|^{k+(k-1)/2+\dots+3/2^{k-3}}\left(\frac{|AA|}{|A|}\right)^{3/2(1+1/2+\dots+1/2^{k-3})}E^{+}(A)^{1/2^{k-2}} \\
&\lesssim |A|^{k+(k-1)/2+\dots+3/2^{k-3}}\left(\frac{|AA|}{|A|}\right)^{3/2(1+1/2+\dots+1/2^{k-3})}|A|^{3/2^{k-1}}|A|^{1/2^{k-2}}.
\end{align*}
Since
\begin{align}
\label{eq:mmm}
k+\frac{(k-1)}{2}+\dots+\frac{3}{2^{k-3}}=2k-2-2^{3-k},
\end{align}
and
\begin{align*}
1+1/2+\dots+1/2^{k-3}=2-2^{3-k},
\end{align*}
we get 
\begin{align}\label{eq:Tk}
T_{k}(A)\lesssim |A|^{2k-5+2^{3-k}}|AA|^{3(1-2^{1-k})}.
\end{align}
To show~\eqref{eq:mmm}  we use the  identity 
$$\sum_{m=1}^{n}\frac{m}{2^{m}}=2-\frac{n}{2^{n}}+\frac{1}{2^{n-1}},$$ 
which follows from the equation 
$$2t=t+1-\frac{n}{2^{n}}+\sum_{m=1}^{n-1}\frac{1}{2^{m}},$$
where 
$$t=\sum_{m=1}^{n}\frac{m}{2^{m}}.$$
For $x\in \F_p$ let
$$r_{A, k}(x)=|\{(x_{1}, \cdots, x_{k})\in A^{k}: x_{1}+\cdots+x_{k}=x\}|.$$ Then 
$$|A|^{k}=\sum_{x}r_{A, k}(x).$$ 
By the Cauchy-Schwarz inequality 
\begin{align*}
|A|^{2k}\le |kA|T_k(A),
\end{align*}
since
$$\sum_{x}r_{A, k}(x)^{2}=T_{k}(A).$$
Applying  \eqref{eq:Tk} we obtain
\begin{align*}
|A|^{5-2^{3-k}}\lesssim |kA||AA|^{3-3\times 2^{1-k}},
\end{align*}
which implies 
\begin{align}
\label{eq:case1}
\max\{|kA|,|AA|\} \gtrsim |A|^{(5-2^{3-k})/(4-3\times 2^{1-k})}.
\end{align}
Suppose next that there exists some $3\le j \le k$ with 
\begin{align*}
|A|^{j-3/2}T_{j-1}(A)^{1/2}|AA|^{3/2}\le \max\left\{|A|^{2j-3}|AA|,\frac{T_{j-1}(A)|AA|^2}{|A|}\right\}. 
\end{align*}
If 
\begin{align*}
|A|^{2j-3}|AA|\ge \frac{T_{j-1}(A)|AA|^2}{|A|},
\end{align*}
then by Lemma~\ref{lem:EAA}
\begin{align*}
T_j(A)\lesssim |A|^{2j-3}|AA|.
\end{align*}
Using the Cauchy-Schwarz inequality as before we get 
\begin{align*}
|A|^{2j}\lesssim |A|^{2j-3}|jA||AA|,
\end{align*}
which implies 
\begin{align*}
\max\{|kA|,|AA|\} \gtrsim |A|^{3/2},
\end{align*}
and is better than~\eqref{eq:case1}. If

\begin{align*}
\frac{ T_{j-1}(A)|AA|^2}{|A|} \ge |A|^{2j-3}|AA|,
\end{align*}
then 
$$T_j(A)\lesssim \frac{T_{j-1}(A)|AA|^2}{|A|}\le |A|^{2j-7}|AA|^2E^+(A),$$
and hence by Corollary~\ref{cor:ERRS1}
$$T_j(A)\lesssim  |A|^{2j-6}|AA|^{7/2}.$$
This implies that 
\begin{align*}
|A|^{6}\lesssim |jA||AA|^{7/2}
\end{align*}
and hence 
\begin{align*}
\max\{|kA|,|AA|\} \gtrsim |A|^{4/3},
\end{align*}
which is better than~\eqref{eq:case1}.

 Suppose next that
\begin{align*}
|A||(k-1)A||AA|\ge p^2.
\end{align*}
By~\eqref{eq:thmkAcond} this implies
\begin{align*}
|(k-1)A||AA|\ge |A|^{2(5-2^{3-k})/(4-3\times 2^{1-k})},
\end{align*}
which completes the proof.


\end{document}